\theoremstyle{definition}
\newtheorem{theorem}{Theorem}[section]
\newtheorem*{theorem*}{Theorem}
\newtheorem{proposition}[theorem]{Proposition}
\newtheorem{lemma}[theorem]{Lemma}
\newtheorem{corollary}[theorem]{Corollary}
\newtheorem{conjecture}[theorem]{Conjecture}
\newtheorem{example}[theorem]{Example}
\theoremstyle{remark}
\numberwithin{equation}{section}
\newcommand{\bbQ}{\mathbb{Q}}
\newcommand{\suchthat}{\,:\,}
\newcommand{\spam}{\operatorname{span}}
\newcommand{\Sym}{\ensuremath{\operatorname{Sym}}}
\newcommand{\NSym}{\ensuremath{\operatorname{NSym}}}
\newcommand{\NCSym}{\ensuremath{\operatorname{NCSym}}}
\newcommand{\slashp}{\mid}
\newcommand{\bx}{\text{\bf x}}
\newcommand{\id}{\mathrm{id}}
\newcommand{\enc}{{e}}         	% elementary fcns in NCSym
\newcommand{\mnc}{{m}}           	% monomial fcns in NCSym
\newcommand{\pnc}{{p}}                % powersum fcns in NCSym
\newcommand{\xnc}{{x}}                 % extera fcns in NCSym
\newcommand{\mhm}{\mathbf{m}}           	% monomial fcns in NCSym
\newcommand{\phm}{\mathbf{p}}                % powersum fcns in NCSym
\newcommand{\xhm}{\mathbf{x}}  
\newcommand{\calC}{{\mathcal{C}}} 
\newcommand{\calD}{{\mathcal{D}}} 
\newcommand{\Set}{{\mathrm{Set}}} 
\newcommand{\Vect}{{\mathrm{Vect}}} 
\newcommand{\bP}{{\mathbf{P}}} 
\newcommand{\bQ}{{\mathbf{Q}}} 
\newcommand{\bPi}{{\mathbf{\Pi}}} 
\newcommand{\bone}{{\mathbf{1}}} 
\newcommand{\calK}{{\mathcal{K}}} 
\newcommand{\Sp}{{\mathrm{Sp}}} 
\newcommand{\St}{{\mathrm{St}}} 
\newcommand{\stn}{{\llbracket n \rrbracket}} 
\newcommand{\mymu}{{u}}
\newcommand\svw[1]{{\color{black}{#1}}}
\newlength\cellsize \setlength\cellsize{15\unitlength}
\newcommand\cellify[1]{\def\thearg{#1}\def\nothing{}%
\ifx\thearg\nothing
\vrule width0pt height\cellsize depth0pt\else
\hbox to 0pt{\usebox2\hss}\fi%
\vbox to 15\unitlength{
\vss
\hbox to 15\unitlength{\hss$#1$\hss}
\vss}}
\newcommand\tableau[1]{\vtop{\let\\=\cr
\setlength\baselineskip{-16000pt}
\setlength\lineskiplimit{16000pt}
\setlength\lineskip{0pt}
\halign{&\cellify{##}\cr#1\crcr}}}
\newcommand\expath[1]{%
\hbox to 0pt{\usebox3\hss}%
\vbox to 15\unitlength{
\vss
\hbox to 15\unitlength{\hss$#1$\hss}
\vss}}
\newcommand\bas[1]{\omit \vbox to \cellsize{ \vss \hbox to \cellsize{\hss$#1$\hss} \vss}}
\begin{document}

\title[The extra basis in noncommuting variables]{The extra basis in noncommuting variables}

\author{Farid Aliniaeifard}
\address{
 Department of Mathematics,
 University of British Columbia,
 Vancouver BC V6T 1Z2, Canada}
\email{farid@math.ubc.ca}

\author{Stephanie van Willigenburg}
\address{
 Department of Mathematics,
 University of British Columbia,
 Vancouver BC V6T 1Z2, Canada}
\email{steph@math.ubc.ca}

\thanks{\svw{Both} authors were supported in part by the \svw{Natural} Sciences and Engineering Research Council of Canada.}
\subjclass[2010]{05E05, 05E10, 18M80, 20C30}
\keywords{\svw{coproduct,} Hopf monoid,   noncommutative symmetric function, symmetric function}

\begin{abstract} We answer a question of Bergeron, Hohlweg, Rosas, and Zabrocki from 2006 to give a combinatorial description for the coproduct of the $\xnc$-basis in the Hopf algebra of symmetric functions in noncommuting variables, $\NCSym$, which arises in the theory of Grothendieck bialgebras. We achieve this using the theory of Hopf monoids and the Fock functor. We also determine combinatorial expansions of this basis in terms of the monomial and power sum symmetric functions in $\NCSym$, and by taking the commutative image of the $\xnc$-basis we discover a new multiplicative basis for the algebra of symmetric functions.\end{abstract}

\maketitle
%\tableofcontents
%
%% Introduction
%\section{Notation: to remove} Remember to macro!
%\begin{description}
%\item[Integer partitions] $\lambda = \lambda _1 \lambda _2 \cdots \lambda _{\ell(\lambda)} \vdash n$, $\lambda ! = \lambda _1 !\lambda _2 !\cdots \lambda _{\ell(\lambda)}!$, $\lambda^! = {r_1}!{r_2}! \cdots  {r_n}!$ $i$ appears in $\lambda$ exactly $r_i$ times for $1\leq i \leq n$.
%\item[Set partitions] $\pi = B_1/B_2/ \cdots /B_{\ell(\pi)} \vdash [n]$, $\lambda (\pi) =  |B_1| |B_2| \cdots  |B_{\ell(\pi)}|$.
%\item[Symmetric functions] $m _\lambda, e_\lambda, p_\lambda, s_\lambda, h_\lambda, x_{\lambda(\pi)} = \rho (\ncx _\pi)$.
%\item[Symmetric functions in noncommuting variables] $\mnc_\pi, \enc_\pi, \pnc_\pi, \hnc_\pi, \xnc_\pi$. Q: Maybe make these all bold to match BHRZ?
%\item[Monoid] 
%\begin{itemize}
%\item Vector species $\mathbf{P}$.
%\item General sets $S, S_1, S_2$.
%\item $\pi\mid _S$ the partition of $S$ whose blocks are nonempty intersections of blocks of $\pi$ with $S$.
%\item $\mathbf{\Pi}$ the vector species of set partitions.
%\item Generic notation for a category is $\calC$ and $\calD$.
%\item The objects in a category are denoted by $A,B,C$ and the morphisms are $f,g,h$. 
%\item Functors are denoted by capital letters $F,G,H$. 
%\end{itemize} 
%\end{description}

% Introduction
\section{Introduction}\label{sec:intro}  
Symmetric functions in noncommuting variables, $\NCSym$, were introduced in 1936 by Wolf \cite{Wolf} who produced a noncommutative analogue of the fundamental theorem of symmetric functions. Her work was generalized in 1969 by Bergman and Cohn \cite{BC}, but there were few further discoveries until 2004 when Rosas and Sagan \cite{RS} substantially advanced the area, discovering numerous noncommutative analogues of classical concepts in symmetric function theory, such as the RSK map and the $\omega$ involution. In particular, they gave noncommutative analogues of the bases of monomial, power sum, elementary, and complete homogeneous symmetric functions, each analogue mapping to its (scaled) symmetric function analogue under the projection map that lets the variables commute.

{Rosas and Sagan did define {a partial set of} Schur functions in noncommuting variables, which projected to scaled classical Schur functions.  However, they were not a basis for NCSym since they were indexed by integer \svw{partitions} rather than \svw{by} set partitions.} This led them to ask in \cite[Section 9]{RS} whether there is a way to define functions indexed by set partitions having properties analogous to the classical Schur functions. In 2022, a family of Schur functions in noncommuting variables was introduced in \cite{ALvW22} and Rosas and Sagan's question was answered.  In an earlier attempt to answer Rosas and Sagan's question, in 2006, Bergeron, Hohlweg, Rosas, and Zabrocki \cite{BHRZ} suggested a possible candidate, the $\xnc$-basis, which corresponds to the classes of simple modules of the Grothedieck bialgebra of the semi-tower of partition lattice algebras. They also gave the product and internal coproduct of the $x$-basis; however, the coproduct formula for the $x$-basis remained open, and in the last sentence of their paper, they \svw{stated that}  ``It would be interesting to find a formula for $\Delta(x_\pi)$", where \svw{$\Delta$ denotes the coproduct} and $\pi$ is a set partition of $[n]=\{1,2,\dots, n\}$. The $x$-basis has been the subject of other studies; for example,
the expansion of the chromatic symmetric functions in $\NCSym$ in terms of the $x$-basis was studied in  \cite{DvW}.

In this paper,  we delve further into the study of the $x$-basis of $\NCSym$ and its analogue in the Hopf monoid of set partitions, \svw{the} $\xhm$-basis. We answer the question \svw{of} Bergeron, Hohlweg, Rosas, and Zabrocki from 2006 and find a combinatorial formula for the coproduct of the $\xnc$-basis of $\NCSym$. Also, we describe the \svw{expansion} of the $\xnc$-basis elements in terms of other bases of $\NCSym$. \svw{In particular,} for every set partition $\pi$ of $[n]$, the coefficients in the expansion of $\xnc_\pi$ in terms of the $\mnc$-basis can be described by acyclic \svw{orientations} with one sink of certain complete multipartite graphs.

More precisely, our paper is structured as follows. 
In Section \ref{sec:background}, we present the Hopf algebras of symmetric functions $\Sym$ and symmetric functions in noncommuting variables $\NCSym$, then we describe the Hopf monoid of set partitions ${\bf \Pi}$. Moreover, we show how the Hopf monoid of set partitions is related to the Hopf algebra of symmetric functions in noncommuting variables via the Fock functor. In Section \ref{sec:coproduct}, given a set partition $A$ of a finite set $S$, we obtain a coproduct formula for $\xhm_A$ in ${\bf \Pi}$ in Theorem \ref{thm:prod-coprod}. Then, in Corollary \ref{cor:coproduct}, we apply the Fock functor on $\bPi$ to find a coproduct formula for $\xnc_\pi$ in $\NCSym$ for every set partition $\pi$ of $[n]$. Also, by applying $\rho$, the algebra morphism that lets the variables commute, \svw{to} the $x$-basis elements of $\NCSym$, in Proposition \ref{prop:xbasisSym}, we introduce the $x$-basis of $\Sym$, which is a multiplicative basis. In Section \ref{sec:change}, we expand $\xnc_\pi$ in terms of other bases of $\NCSym$, \svw{in particular,} we show in Theorem \ref{thm:x-to-m} that the coefficients appearing in the expansion of $\xnc_\pi$ in terms of the $\mnc$-basis up to a sign are the sum of the numbers of acyclic orientations with one sink of certain complete multipartite graphs obtained from set partitions; so they are all negative or \svw{all} positive. Finally, we show in Theorem \ref{thm:e-pos} that $x_{\stn}$ is $\enc$-positive in $\NCSym$ if and only if $x_{n}$ is $e$-positive in $\Sym$, where $\llbracket n \rrbracket$ is the set partition of $n$ with only one block.

\section{Preliminaries}\label{sec:background}
%
%\FA{The notation for general set partitions are $A$ and $B$, and for set partitions of $[n]$, they are denoted by $\pi,\sigma, \tau, \nu$.}
%
%
%\FA{$\eta$ is the notation for a permutation.}
%
%\FA{$S, T$ general sets. Blocks of $A\vdash S$ are $S_i$.}
%
%\FA{$\mu$ has been used for the product and also for the M\"obius, I didn't find it confusing.}

%We present the Hopf monoid of set partitions, the Hopf algebras of symmetric functions in commuting and noncommuting variables, and several of their well-known bases.

%A set partition $\pi$ of a finite set $S$, denoted by $\pi\vdash S$, is a collection of disjoint nonempty subsets of $S$ whose union is $S$. The subsets are called the blocks of $S$. We usually denote a set partition $\pi$ with blocks $\pi_1, \pi_2, \dots, \pi_k$ by $\pi=\pi_1/\pi_2/\cdots/\pi_k$. Also, we denote by $\emptyset$ the unique empty set partition of $[0]=\emptyset$. Let $\Pi[S]$ be the set of all set partitions of $S$.
%Given a set partition $\pi$ of $S$ and $T\subseteq S$, the restriction $\pi|_T$ is the set partition of $T$ whose blocks are the nonempty intersections of the blocks of $\pi$ with $T$. 

% A quasi-shuffle of $\pi$ and $\sigma$ is any partition of $S$ whose restriction to $S_1$ is $\pi$ and to $S_2$ is $\sigma$. 

\subsection{Integer and set partitions}
An \emph{integer partition} $\lambda$ of a positive integer $n$, denoted by $\lambda\vdash n$, is a nonincreasing list $\lambda_1\lambda_2\svw{\cdots} \lambda_l$ of positive integers whose sum is $n$. Each positive integer $\lambda_i$ is called a \emph{part} of $\lambda$. Let $m_i$ be the number of parts of $\lambda$ that are equal to $i$. 
 We let $\ell(\lambda)$ be the number of parts of $\lambda$, and $$ \lambda!=\prod_{i=1}^{\ell(\lambda)}\svw{\lambda_i !}\quad \text{and}\quad  \lambda^!=\prod_{i=1}^{n}m_i!.$$
 
 Given $\lambda\vdash n$ and $\gamma\vdash m$, let $\lambda|\gamma$ be the integer partition of $n+m$ whose parts are the parts of $\lambda$ and $\gamma$. For example, if $\lambda=3221$ and $\gamma=221$, then $\lambda|\gamma=3222211$.
 
 A \emph{decomposition} of a finite set $S$ is a finite sequence $(S_1,S_2, \dots, S_l)$ of disjoint subsets (possibly empty)
of $S$ whose union is $S$. In this situation, we write
$S=S_1\sqcup S_2 \sqcup \cdots \sqcup S_l$. 
  A \emph{set partition} $A$ of a finite set $S$, denoted by $A\vdash S$, is a collection of disjoint nonempty subsets $S_1,S_2, \dots, S_l$ of $S$ whose union is $S$. Each subset $S_i$ is called a \emph{block} of $A$.   We usually denote a set partition $A$ with blocks $S_1, S_2, \dots, S_l$ by $A=S_1/S_2/\cdots/S_l$.  Also, we denote by $\ell(A)$ the number of blocks of the set partition $A$ and \svw{by} $\emptyset$ the unique empty set partition of $[0]=\emptyset$. 
  
  Given a set partition $A$ of $S$ and $T\subseteq S$, let $A|_T$ be the set partition of $T$ whose blocks are the nonempty intersections of the blocks of $A$ with $T$. Let $S=S_1\sqcup  S_2$. Given set partitions $A$ of $S_1$ and $B$ of $S_2$, their \emph{disjoint union} is the set partition $A\sqcup B$ of $S$ whose blocks are the disjoint union of the blocks of $A$ and $B$.

Let $\Pi[S]$ be the set of all set partitions of $S$. We have that $\Pi[S]$ is a lattice where the ordering is by refinement, that is, for $A, B \vdash   S$, \svw{we write} $B\leq A$ if each block of $B$ is contained in some block of $A$. The meet (greatest lower bound) and join (least upper bound) operations in $\Pi[S]$ are denoted by $\wedge$
and $\vee$, respectively.
Let $\mymu$ be the M\"obius function of $\Pi[S]$, then for $B\leq A$,  by \cite[Section 3]{RS} we have 
\begin{equation}\label{eq:Mobius}
\mymu(B,A)= \prod_{i=1}^{\ell(\lambda)} (-1)^{\lambda_i(B,A)-1}(\lambda_i(B,A)-1)!,
\end{equation} 
 where $\lambda_i(B,A)$ is the number of blocks of $B$ contained in the $i$th block of $A$ for any ordering of the blocks of $A$. 
  
   For convenience, when writing a set partition of a subset of integers, we
usually omit the set parentheses and commas
of the blocks. For example,  $13/24$ is a set partition of $\{1,2,3,4\}$ with blocks $\{1,3\}$ and $\{2,4\}$. Let $[n]=\{1,2,\ldots,n\}$.  We denote by $\stn$ the set partition $12\svw{\cdots} n \vdash [n]$.    
If $\pi\vdash [n]$, by Equation \eqref{eq:Mobius},
\begin{equation}\label{eq:mymun}\mymu(\pi,\stn)= (-1)^{\ell(\pi)-1}(\ell(\pi)-1)!.\end{equation}
\begin{example}\label{ex:mu}
Let $\pi=13/24\vdash [4]$ and $\sigma=1/3/24\vdash [4]$. Then 
$$u(\svw{\sigma, \pi})=(-1)^{2-1}(2-1)!(-1)^{1-1}0!=-1\quad \text{and}\quad u(\sigma,\llbracket 4 \rrbracket)=(-1)^{3-1}(3-1)!=2.$$
\end{example}

Given a subset $S$ of integers, let $\St$ be the unique order-preserving map from $S$ to $|S|$. For $A\vdash S$, define $\St(A)\vdash [|S|]$ to be the set partition of $[|S|]$ obtained by replacing $i$ \svw{with} $\St(i)$ in $A$, and let the \emph{shape} of $A$ to be the integer partition $\lambda(A)$ whose parts are the block sizes of $A$ listed in nonincreasing order. For example, $\St(167/38)=134/25$ and $\lambda(167/38)=32$.

  Given two set partitions $\pi=S_1/S_2/\cdots/S_l\vdash [n]$ and $\sigma=T_1/T_2/\cdots/T_k\vdash [m]$, their \emph{slash product} is 
$$\pi|\sigma=S_1/S_2/\cdots/S_l/T_1+n/T_2+n/\cdots/T_k+n\vdash [n+m],$$
where $T_i+n=\{t+n:t\in T_i\}$ for each $i$.  For $\lambda=\lambda_1\lambda_2\svw{\cdots} \lambda_l$, let $\llbracket \lambda \rrbracket=\llbracket\lambda_1\rrbracket|\llbracket\lambda_2\rrbracket|\cdots|\llbracket\lambda_l\rrbracket.$

Given any set partition $\pi\vdash [n]$ and $\eta\in \mathfrak{S}_n$, where $\mathfrak{S}_n$ is the
symmetric group of size $n!$, define $\eta(\pi)$ to be the set partition obtained by replacing $i$ \svw{with} $\eta(i)$ in $\pi$. For example, if $\pi=13/24$ and $\eta=1324$, \svw{in one-line notation,} then $\eta(\pi)=\eta(1)\eta(3)/\eta(2)\eta(4)=12/34$.   For any set partition $\pi\vdash [n]$ of shape $\lambda$, we can find a permutation $\eta\in \mathfrak{S}_n$ such that  $\eta(\llbracket \lambda\rrbracket)=\pi$. For example, if $\pi=13/24$, then $\lambda(\pi)=22$, and  when $\eta=1324$, 
$$\eta(12|12)=\eta(12/34)=\eta(1)
\eta(2)/\eta(3)\eta(4)=13/24.$$

\subsection{Symmetric functions}
Let $\mathbb{Q}[[x_1,x_2,\dots]]$ be the algebra of formal power series in commuting variables $\{x_1,x_2,\dots\}$.
The \emph{algebra of symmetric functions}, {$\Sym$,} is
$$\Sym = \Sym ^0 \oplus \Sym ^1 \oplus \cdots \subset \bbQ [[x_1, x_2, \ldots ]]$$ where $\Sym ^0 = \spam \{1\}$ and 
{the} $n$th graded piece for $n\geq 1$ has the following bases, known respectively as the $n$th graded piece of the \emph{$\mnc$-, $\pnc$-, $\enc$-basis of $\Sym$},
{$$\begin{array}{rclclcc}
\Sym ^n&=&\mathbb{Q}\text{-span}\{ m_\lambda \suchthat \lambda\vdash n\} &=&\mathbb{Q}\text{-span}\{ p_\lambda \suchthat \lambda\vdash n\}&=& \mathbb{Q}\text{-span}\{ e_\lambda \suchthat \lambda\vdash n\} 
\end{array}$$}\svw{where} these functions are defined as follows, {given an integer partition} $\lambda = \lambda_1\lambda_2\cdots \lambda_{l}\vdash n$.

The \emph{monomial symmetric function}, $m_\lambda$, is given by
$$m_\lambda = \sum x_{i_1}^{\lambda _1}x_{i_2}^{\lambda _2}\cdots x_{i_{l}}^{\lambda _{l}}$$summed over distinct monomials and the $i_j$ are also distinct.

\begin{example}\label{ex:monomials}
$m_{21} = x_1^2x_2 + x_2^2x_1+ \cdots$
\end{example}

The \emph{power sum symmetric function}, $p_\lambda$, is given by
$$p_\lambda = p_{\lambda _1}p_{\lambda _2}\cdots p_{\lambda _{l}}$$where {$p_{i} = x_1^{i}+x_2^{i}+\cdots$.}

\begin{example}\label{ex:powers}
$p_{21} = (x_1^2+x_2^2+ \cdots)(x_1+x_2+\cdots)$
\end{example}

The \emph{elementary symmetric function}, $e_\lambda$, is given by
$$e_\lambda = e_{\lambda _1}e_{\lambda _2}\cdots e_{\lambda _{l}}$$where {$e_{i} = \sum_{j_1<j_2<\cdots <j_{i}} x_{j_1}x_{j_2}\cdots x_{j_{i}}$.}

\begin{example}\label{ex:elementaries}
$e_{21} = (x_1x_2+x_2x_3+ \cdots)(x_1+x_2+\cdots)$
\end{example}

\subsection{Symmetric functions in noncommuting variables}\label{sec:NCSym}
Let $\mathbb{Q} [[ \bx_1, \bx_2, \ldots ]]$ be the algebra of formal power series in noncommuting variables $\{\bx_1,\bx_2,\dots\}$.
 The \svw{\emph{ algebra of symmetric functions in noncommuting variables},} $\NCSym$, is
$$\NCSym  = \NCSym ^0 \oplus \NCSym ^1 \oplus \cdots \subset \mathbb{Q} [[ \bx_1, \bx_2, \ldots ]]$$ where $\NCSym ^0 = \spam \{1\}$ and {the} $n$th graded piece for $n\geq 1$ has the following bases {\cite{RS}}, known respectively as the $n$th graded piece of the \emph{$\mnc$-, $\pnc$-, $\enc$-basis of $\NCSym$},
{$$\begin{array}{rclclcc}
\NCSym ^n&=& \mathbb{Q}\text{-span}\{ \mnc_\pi \suchthat \pi\vdash [n]\} &=&\mathbb{Q}\text{-span}\{ \pnc_\pi \suchthat \pi\vdash [n]\}
&=& \mathbb{Q}\text{-span}\{ \enc_\pi \suchthat \pi\vdash [n]\} 
\end{array}$$}where these functions are defined as follows, given a set partition $\pi \vdash [n]$.

The \emph{monomial symmetric function in $\NCSym$}, $\mnc_\pi$, is given by 
$$\mnc_\pi = \sum _{(i_1, i_2, \ldots , i_n)} \bx_{i_1}\bx_{i_2} \cdots \bx_{i_n}$$summed over all tuples $(i_1, i_2, \ldots , i_n)$ with $i_j=i_k$ if and only if $j$ and $k$ are in the same block of $\pi$.

\begin{example}\label{ex:mpi}
$\mnc_{13/2}=\bx_1\bx_2\bx_1+\bx_2\bx_1\bx_2+\bx_1\bx_3\bx_1+\bx_3\bx_1\bx_3+\bx_2\bx_3\bx_2+\bx_3\bx_2\bx_3+\cdots$
\end{example}

The \emph{power sum symmetric function in $\NCSym$}, $\pnc_\pi$, is given by 
$$\pnc_\pi = \sum _{(i_1, i_2, \ldots , i_n)} \bx_{i_1}\bx_{i_2} \cdots \bx_{i_n}$$summed over all tuples $(i_1, i_2, \ldots , i_n)$ with $i_j=i_k$ if  $j$ and $k$ are in the same block of $\pi$.

\begin{example}\label{ex:ppi}
$\pnc_{13/2}=\bx_1\bx_2\bx_1+\bx_2\bx_1\bx_2+\cdots + \bx_1^3+\bx_2^3 +\cdots $
\end{example}

The \emph{elementary symmetric function in $\NCSym$}, $\enc_\pi$, is given by 
$$\enc_\pi = \sum _{(i_1, i_2, \ldots , i_n)} \bx_{i_1}\bx_{i_2} \cdots \bx_{i_n}$$summed over all tuples $(i_1, i_2, \ldots , i_n)$ with $i_j\neq i_k$ if  $j$ and $k$ are in the same block of $\pi$.

\begin{example}\label{ex:epi}
$\enc_{13/2}= {\bx_1\bx_1\bx_2+\bx_1\bx_2\bx_2+\bx_2\bx_2\bx_1+\bx_2\bx_1\bx_1}+\cdots + \bx_1\bx_2\bx_3+\bx_2\bx_3\bx_4 +\cdots$
\end{example}

\svw{The next function will be the main object of study in this paper, and was introduced in \cite{BHRZ}. While it was not given a name in that paper, due to the notation chosen at that time, we give it the following name.} 

The \emph{extra symmetric function in $\NCSym$}, $\xnc_\pi$, is given by 
\begin{equation}\label{eq:p-basis} \xnc_\pi=\sum_{\sigma\leq \pi} \mymu(\svw{\sigma, \pi}) \pnc_\sigma.\end{equation}
\begin{example}\label{ex:xpi}
$\xnc_{13/2}=\mymu(13/2,13/2)\pnc_{13/2}+\mymu(1/2/3,13/2)\pnc_{1/2/3}=\pnc_{13/2}-\pnc_{1/2/3}=\bx_1\bx_2\bx_1+\bx_2\bx_1\bx_2+\cdots + \bx_1^3+\bx_2^3 +\cdots - (\bx_1\bx_2\bx_1+\bx_2\bx_1\bx_2+\cdots + \bx_1^3+\bx_2^3 +\cdots+\bx_1\bx_2\bx_3+\bx_1\bx_3\bx_2+\bx_2\bx_1\bx_3+\bx_2\bx_3\bx_1+\bx_3\bx_1\bx_2+\bx_3\bx_2\bx_1+\cdots+\bx_1^2\bx_2+\bx_2^2\bx_1+\cdots+\bx_1\bx_2^2+\bx_2\bx_1^2+\cdots)=-(\bx_1\bx_2\bx_3+\bx_1\bx_3\bx_2+\bx_2\bx_1\bx_3+\bx_2\bx_3\bx_1+\bx_3\bx_1\bx_2+\bx_3\bx_2\bx_1+\cdots)-(\bx_1^2\bx_2+\bx_2^2\bx_1+\cdots)-(\bx_1\bx_2^2+\bx_2\bx_1^2+\cdots)=-m_{1/2/3}-m_{12/3}-m_{1/23}$
\end{example}
We have \svw{from \cite{BHRZ}}  that $$\NCSym^n=\mathbb{Q}\text{-span}\{ \xnc_\pi \suchthat \pi\vdash [n]\},$$ and the set $\{x_\pi: \pi \text{~is a set partition}\}$ is called the \emph{$x$-basis} of $\NCSym$. 
%This basis of $\NCSym$ is the main object of study in this paper.

The algebra $\NCSym$ is indeed a Hopf algebra, and the product \svw{$\mu$} and coproduct \svw{$\Delta$} of the $p$-basis are given as follows \cite{BHRZ}. Given $\pi\vdash [n]$ and $\sigma\vdash [m]$,  we have
\begin{equation}\label{eq:prodNCSym} 
\mu(p_\pi\otimes p_\sigma)=p_{\pi|\sigma}.
\end{equation} 
 Also  for $\pi\vdash [n]$, 
\begin{equation}\label{eq:coprodNCSym}
\Delta(p_\pi)=
\sum_{[n]=S_1\sqcup S_2: \atop{\pi=\pi|_{S_1}\sqcup \pi|_{S_2}}}p_{\St(\pi|_{S_1})}\otimes p_{\St(\pi|_{S_2})}.
\end{equation} 

\begin{example}
We have that 
$$\mu(p_{13/24}\otimes p_{13/2})=p_{13/24/57/6}$$ and 
$$\Delta(p_{13/2})=1\otimes p_{13/2} +p_{12}\otimes p_{1}+p_{1}\otimes p_{12}+p_{13/2}\otimes 1.$$
\end{example}

Let $\rho$ be the algebra morphism
$$\rho \suchthat \bbQ [[ \bx_1, \bx_2, \ldots ]] \rightarrow \bbQ [[ x_1, x_2, \ldots ]]$$ that simply lets the variables commute. We have the following result.

\begin{lemma}\label{lem:RSrho} \cite[Theorem 2.1]{RS} Let $\pi$ be a set partition of $[n]$. Then
\begin{enumerate}[1)]
\item $\rho(m_\pi) = \lambda (\pi) ^! m_{\lambda(\pi)}$,
\item $\rho(p_\pi) = p_{\lambda(\pi)}$, and
\item $\rho(e_\pi) = \lambda (\pi)! e_{\lambda(\pi)}$.
\end{enumerate}
\end{lemma}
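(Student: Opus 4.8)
The plan is to compute $\rho$ directly on each defining sum, tracking how distinct noncommutative monomials collapse once the variables commute. Write $\pi = S_1/\cdots/S_l$ with block sizes $s_r = |S_r|$, so that the parts of $\lambda(\pi)$ are $s_1,\ldots,s_l$. In each case a tuple $(i_1,\ldots,i_n)$ in the defining sum amounts to a choice of values on the positions $[n]$ subject to a per-block condition, and after applying $\rho$ the resulting commutative monomial factors across the blocks as $\prod_r(\text{product of the }s_r\text{ values placed on the positions of }S_r)$. The three statements then differ only in the per-block condition and in the resulting multiplicity.

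For part (2) I would begin with the power sum, which is cleanest. A tuple contributing to $\pnc_\pi$ is exactly a choice of one value $a_r$ for each block $S_r$, since positions in a common block must share a value and there is no cross-block constraint, and $\rho$ sends it to $x_{a_1}^{s_1}\cdots x_{a_l}^{s_l}$. Summing over all $(a_1,\ldots,a_l)$ and comparing with $p_{\lambda(\pi)} = \prod_r p_{s_r} = \prod_r\bigl(\sum_a x_a^{s_r}\bigr)$ gives a term-by-term bijection with no repetition, hence $\rho(\pnc_\pi) = p_{\lambda(\pi)}$.

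For part (3), the elementary case, the condition is that the values on the positions of each block are pairwise distinct, with no cross-block constraint, so the sum factors as a product over blocks. For a single block of size $s_r$, summing the $x$-monomials over all orderings of an $s_r$-element set of distinct values yields $s_r!$ copies of each squarefree monomial $x_{j_1}\cdots x_{j_{s_r}}$ with $j_1<\cdots<j_{s_r}$, that is, $s_r!\,e_{s_r}$. Taking the product over blocks gives $\prod_r s_r!\,e_{s_r} = \lambda(\pi)!\,e_{\lambda(\pi)}$.

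Part (1), the monomial case, is where I expect the bookkeeping to be most delicate and is the main obstacle. Here the condition $i_j = i_k$ if and only if $j,k$ lie in the same block forces a choice of pairwise \emph{distinct} values $a_1,\ldots,a_l$ on the $l$ blocks, and $\rho$ again sends the tuple to $x_{a_1}^{s_1}\cdots x_{a_l}^{s_l}$. The subtlety is that this assignment is no longer injective onto commutative monomials: a fixed monomial of $m_{\lambda(\pi)}$, having distinct variables whose exponent multiset equals the parts of $\lambda(\pi)$, is produced precisely by the assignments sending the blocks of each size $i$ bijectively to the variables carrying exponent $i$. Since there are $m_i$ blocks of size $i$ and $m_i$ such variables, the number of assignments hitting that monomial is $\prod_i m_i! = \lambda(\pi)^!$. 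I would make this rigorous by identifying the fibre of the collapsing map over each monomial of $m_{\lambda(\pi)}$ and showing it has exactly this size, which yields $\rho(\mnc_\pi) = \lambda(\pi)^!\,m_{\lambda(\pi)}$ and completes the lemma.
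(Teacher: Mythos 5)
Your argument is correct and complete in all three parts: the per-block factorization for $\pnc_\pi$ and $\enc_\pi$, and the fibre count of size $\prod_i m_i! = \lambda(\pi)^!$ over each monomial of $m_{\lambda(\pi)}$ in the $\mnc_\pi$ case, are exactly what is needed. Note that the paper does not prove this lemma itself but quotes it from \cite[Theorem 2.1]{RS}, and your direct computation is essentially the standard argument given there.
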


The classical involution $\omega$
defined on $\Sym$, whose analogue in $\NCSym$ is also an involution denoted by $\omega$ \cite[p. 221]{RS}, \svw{is}
defined by
\begin{equation}\label{eq:omega}
\omega(p_\lambda)=(-1)^{n-\ell(\lambda)}p_\lambda \quad \quad \omega(\pnc_\pi)=(-1)^{n-\ell(\pi)}\pnc_\pi,
\end{equation}
for every $\lambda\vdash n$ and $\pi\vdash [n]$.

Given a permutation $\eta\in\mathfrak{S}_n$ and any  monomial $\bx_{i_1}\bx_{i_2}\cdots \bx_{i_n}$ in noncommuting variables, define 
$$\eta\circ (\bx_{i_1}\bx_{i_2}\cdots \bx_{i_n})=\bx_{i_{\eta^{-1}(1)}}\bx_{i_{\eta^{-1}(2)}}\cdots \bx_{i_{\eta^{-1}(n)}},$$ and extend it linearly. Then, define the linear transformation
$$\begin{array}{cccc}
R:& \bbQ [[x_1, x_2, \ldots ]] & \rightarrow& \mathbb{Q} [[ \bx_1, \bx_2, \ldots ]]  \\
& x_{i_1}x_{i_2}\cdots x_{i_n} & \mapsto & \frac{1}{n!}\sum_{\eta\in\mathfrak{S}_n} \eta\circ (\bx_{i_1}\bx_{i_2}\cdots \bx_{i_n}).
\end{array}$$
Note that $\rho R$ is \svw{the} identity \svw{map.} 
Also, by \cite[Equation (4.2)]{DvW} and  \cite[p. 219]{RS}, if $b_\pi$ is any basis element of any of the
above bases of $\NCSym$ and $\eta$ is a permutation, then

\begin{equation}\label{eq:actbasis}
\eta\circ b_\pi=b_{\eta(\pi)}.
\end{equation}

\subsection{Hopf monoids} 
A \emph{vector species} is a functor from the category of finite sets with bijections as morphisms, $\Set^\times$, to the category of vector spaces over $\mathbb{Q}$ with linear transformations as morphisms, $\Vect_\bbQ$. Thus, any vector species $\bP$ takes a finite set $I$ and gives a vector space $\bP[I]$. Moreover, the value of the vector species $\bP$ at the bijection $f:S\rightarrow T$ is denoted by $$\bP[f]=\bP[S]\rightarrow \bP[T].$$
For complete details about Hopf monoids, see \cite{AM}.

\begin{example} \label{ex:MonSet}
 The {\it vector species of set partitions} is  $\bPi: \Set^\times\rightarrow\Vect_\bbQ,$ where 
$$\bPi[S]=\bbQ\text{-span}\{\mhm_{A}: A \vdash S\},$$
and  {if $f: S\rightarrow T$ is a bijection, then 
$$\bPi[f]:\bPi[S]\rightarrow \bPi[T]$$ is defined by the linear extension of the map,
$$\bPi[f](\mhm_{A})=\mhm_{f(A)},$$ where $f(A)$ is the set partition of $T$ obtained by replacing $i$ \svw{with} $f(i)$ in $A$. }
%For each set partition $A$ of $S\subseteq [n]$,  $\bPi[\St](A)=\St(A)$.
 For example, $\bPi[\St](\mhm_{16/38})=\mhm_{\St(16/38)}=\mhm_{13/24}$.
 \end{example} 

%
%\FA{WHERE TO DEFINE THIS? Given a set $S\subseteq [n]$, define $\St: S\rightarrow [|S|]$ where $\St(s)$ is the number of elements in $S$ that are smaller than or equal to $s$. For example, $\St:\{2,3,6,8\}\rightarrow [4]$ is given by $\St(2)=1,\St(3)=2, \St(6)=3,$ and $\St(8)=4$. Thus, $\bPi[\St](16/38)=13/24$.}

\begin{example} 
Let $V$ be a vector space. Define the vector species $$\bone_V: \Set^\times \rightarrow \Vect_\bbQ,$$ where $$\bone_V[S]=\begin{cases} V &  S=\emptyset,\\ 0& \text{otherwise.} \end{cases} $$
\end{example}

{
A \emph{natural transformation} $\alpha$ between functors $F, G:\calC\rightarrow \calD$, denoted by $\alpha:F\rightarrow G$, is an assignment that assigns to each object $I\in \calC$ a morphism $\alpha_I:F[I]\rightarrow G[I]$ of $\calD$ such that for any morphism $f:I\rightarrow J$ of $\calC$ the following diagram 
$$
\begin{tikzpicture} 
\node(FC) at (0,0){$F[I]$};
\node(GC) at (2,0){$G[I]$};
\node(FD) at (0,-2){$F[J]$};
\node(GD) at (2,-2){$G[J]$};
\draw[->] (FC)--(GC);
\node at (1,0.2){$\alpha_I$};
\draw[->] (FD)--(GD);
\node at (1,-1.8){$\alpha_J$};
\draw[->] (FC)--(FD);
\node at (-0.5,-1){$F[f]$};
\draw[->] (GC)--(GD);
\node at (2.5,-1){$G[f]$};
\end{tikzpicture} 
 $$
commutes.
}

The \emph{Cauchy product} of the vector species $\bP$ and $\bQ$ is the vector species 
$$\bP \cdot \bQ:\Set^\times \rightarrow \Vect_\bbQ,$$ where
$$(\bP \cdot \bQ)[S]=\bigoplus_{S=S_1\sqcup S_2}\bP[S_1]\otimes \bQ[S_2],$$
and if $f:S\rightarrow T$ is a morphism, then 
$$
\begin{array}{cccc}
(\bP \cdot \bQ)[f]:&\bigoplus_{S=S_1\sqcup S_2}\bP[S_1]\otimes \bQ[S_2]& \rightarrow &\bigoplus_{T=T_1\sqcup T_2}\bP[T_1]\otimes \bQ[T_2]
\end{array}$$ is the direct sum of the maps 
$$\bP[S_1]\otimes \bQ[S_2] \xrightarrow{\bP[f|_{S_1}]\otimes \bQ[f|_{S_2}] } \bP[f(S_1)]\otimes \bQ[f(S_2)]$$ 
over all pairs $(S_1,S_2)$ with $S=S_1\sqcup S_2$.
%
%\vspace{0.5cm} 
%
%Consider that any natural transformation
%$$\mu:\bP\cdot \bP \rightarrow \bP\quad$$ can be uniquely determined by the maps $$\mu_{S_1,S_2}: \bP[S_1]\otimes \bP[S_2] \rightarrow \bP[S_1\sqcup S_2]$$ where $S=S_1\sqcup S_2$. Also any natural transformation 
%$$\Delta:\bP\rightarrow \bP\cdot \bP$$  can be uniquely determined by the maps 
%$$\Delta_{S_1,S_2}: \bP[S_1\sqcup S_2]\rightarrow \bP[S_1]\otimes \bP[S_2]$$ where $S=S_1\sqcup S_2$. 
%
%\vspace{0.5cm}

A \emph{monoid} is a vector species $\bP$ together with morphisms of species 
$$\mu:\bP\cdot \bP \rightarrow \bP\quad \text{and} \quad \iota: \bone_\bbQ  \rightarrow \bP$$   
subject to the following axioms. 

\emph{Naturality axiom}.  Let $S=S_1\sqcup S_2$. For each bijection $f:S\rightarrow T$, the diagram 
$$
\begin{tikzpicture} 
\node(a) at (1,0){$\bP[S_1]\otimes \bP[S_2]$};
\node(b)  at (8,0){$\bP[S]$};
\node(c) at (1,-2){$\bP[f(S_1)]\otimes \bP[f(S_2)]$};
\node(d)  at (8,-2){$\bP[T]$};

\draw[->] (a)--(b);
\node at (5,0.3){$\mu_{S_1,S_2}$};
\draw[->] (c)--(d);
\node at (5,-1.7){$ \mu_{f(S_1),f(S_2)}$};
\draw[->] (a)--(c);
\node at (8.5,-1){$\bP[f]$};
\draw[->] (b)--(d);
\node at (-0.6,-1){$\bP[f|_{S_1}] \otimes \bP[f|_{S_2}]$};
\end{tikzpicture} 
$$
commutes.

\emph{Associativity axiom}. For each set partition $S=S_1\sqcup S_2\sqcup S_3$, the diagram 
$$
\begin{tikzpicture} 
\node(a) at (0,0){$\bP[S_1]\otimes \bP[S_2]\otimes \bP[S_3]$};
\node(b)  at (8,0){$\bP[S_1]\otimes \bP[S_2\sqcup S_3]$};
\node(c) at (0,-2){$\bP[S_1\sqcup S_2]\otimes \bP[S_3]$};
\node(d)  at (8,-2){$\bP[S]$};

\draw[->] (a)--(b);
\node at (4,0.3){$\id_{S_1}\otimes \mu_{S_2,S_3}$};
\draw[->] (c)--(d);
\node at (4,-1.7){$ \mu_{S_1\sqcup S_2,S_3}$};
\draw[->] (a)--(c);
\node at (8.9,-1){$\mu_{S_1,S_2\sqcup S_3}$};
\draw[->] (b)--(d);
\node at (-1.1,-1){$\mu_{S_1,S_2} \otimes  \id_{S_3}$};
\end{tikzpicture} 
$$
commutes. 

\emph{Unit axiom}. For each finite set $S$, the diagrams
$$
\begin{tikzpicture} 
\node(a) at (0,0){$\bP[S]$};
\node(b)  at (5,0){$\bP[\emptyset]\otimes \bP[S]$};
\node(c)  at (5,-2){$\bbQ\otimes\bP[S]$};

\draw[->] (b)--(a);
\node at (2,0.3){$ \mu_{\emptyset,S}$};
\draw[->] (c)--(b);
\node at (5.8,-1){$\iota_{\emptyset}\otimes \id_S$};
\draw[double] (c)--(a);

\end{tikzpicture} 
\quad  \quad 
\begin{tikzpicture} 
\node(a)  at (0,0){$\bP[S]\otimes \bP[\emptyset]$};
\node(b) at (5,0){$\bP[S]$};
\node(c)  at (0,-2){$\bP[S]\otimes {\bbQ}$};

\draw[->] (a)--(b);
\node at (2.5,0.3){$ \mu_{S,\emptyset}$};
\draw[double] (c)--(b);
\node at (-0.8,-1){$\id_S\otimes \iota_{\emptyset}$};
\draw[->] (c)--(a);

\end{tikzpicture} 
$$

commute.

A  \emph{comonoid} is defined dually with
 two morphisms of species 
$$\Delta: \bP\rightarrow \bP \cdot \bP\quad \quad \text{and}\quad \quad \varepsilon: \bP \rightarrow \bone_\bbQ.$$

A \emph{Hopf monoid} $\bP$ is a monoid and a comonoid that satisfies the following axiom. 

\emph{Compatibility axiom}. Let $S_1\sqcup S_2$ and $T_1\sqcup T_2$ be set partitions of $S$, and consider the pairwise \svw{intersections} 
$$I=S_1\cap T_1, J=S_1\cap T_2, K=S_2\cap T_1, L=S_2\cap T_2.$$  Then the diagram 
$$
\begin{tikzpicture} 
\node(a) at (0,0){$\bP[I]\otimes \bP[J]\otimes \bP[K]\otimes \bP[L]$};
\node(b) at (10,0){$\bP[I]\otimes \bP[K]\otimes \bP[J]\otimes \bP[L]$};
\node(c) at (0,-3){$\bP[S_1]\otimes \bP[S_2]$};
\node(d) at (5,-3){$\bP[S]$};
\node(e) at (10,-3){$\bP[T_1]\otimes \bP[T_2]$};

\draw[->,thick](a)--(b);
\node at (5,0.3){$\id_I\otimes {\rm twist} \otimes \id_L$};
\draw[->,thick](b)--(e);
\node at (11.2,-1.5){$\mu_{I,K}\otimes \mu_{J,L}$};
\draw[->,thick](c)--(a);
\node at (-1.2,-1.5){$\Delta_{I,J}\otimes \Delta_{K,L}$};
\node at (3,-3.3){$\mu_{S_1,S_2}$};
\draw[->,thick](c)--(d);
\node at (7,-3.3){$\Delta_{T_1,T_2}$};
\draw[->,thick](d)--(e);

\end{tikzpicture} 
$$
commutes, where the map ${\rm twist}$ maps $a\otimes b$ to $b\otimes a$. In addition, \svw{the} diagrams
$$
\begin{tikzpicture} 
\node(a) at (0,0){$\bP[\emptyset]\otimes \bP[\emptyset]$};
\node(b) at (5,0){$\bbQ\otimes \bbQ$};
\node(c) at (0,-2){$\bP[\emptyset]$};
\node(d) at (5,-2){$\bbQ$};
\draw[->] (a)--(b);
\node at (2.5,0.3){$\varepsilon_\emptyset\otimes \varepsilon_\emptyset$};
\draw[->] (c)--(d);
\node at (2.5,-2.3){$\varepsilon_\emptyset$};
\draw[->] (a)--(c);
\node at (-0.5,-1){$\mu_{\emptyset,\emptyset}$};
\draw[double] (b)--(d);
\node at (4.5,-1){};
\end{tikzpicture} 
\quad \quad 
\begin{tikzpicture} 
\node(a) at (0,0){$\bP[\emptyset]\otimes \bP[\emptyset]$};
\node(b) at (5,0){$\bbQ\otimes \bbQ$};
\node(c) at (0,-2){$\bP[\emptyset]$};
\node(d) at (5,-2){$\bbQ$};
\draw[->] (b)--(a);
\node at (2.5,0.3){$\iota_\emptyset\otimes \iota_\emptyset$};
\draw[->] (d)--(c);
\node at (2.5,-2.3){$\iota_\emptyset$};
\draw[->] (c)--(a);
\node at (-0.5,-1){$\Delta_{\emptyset,\emptyset}$};
\draw[double] (d)--(b);
\node at (4.5,-1){};
\end{tikzpicture} 
$$
$$
\begin{tikzpicture}
\node(a) at (0,0){$\bP[\emptyset]$};
\node(b) at (-2,-2){$\bbQ$};
\node(c) at (2,-2){$\bbQ$};

\draw[->](b)--(a);
\node at (-1.2,-0.8){$\iota_{\emptyset}$};
\draw[->](a)--(c);
\node at (1.2,-0.8){$\varepsilon_{\emptyset}$};
\draw[double](b)--(c);
\end{tikzpicture} 
$$
commute.

\subsection{The Hopf monoid of set partitions}
The vector species ${\bf \Pi}$ introduced in Example $\ref{ex:MonSet}$ is a Hopf monoid. One of its bases is \svw{the} $\mhm$-basis, $\{\mhm_A\}$, and the product and coproduct formulas for $\{\mhm_A\}$ are as follows.

Let $S=S_1\sqcup S_2$. \svw{Then the} product of ${\bf \Pi}$ is
$$
\begin{array}{cccc}
\mu_{S_1,S_2}:&  {\bf \Pi}[S_1] \otimes {\bf \Pi}[S_2] & \rightarrow & {\bf \Pi}[S]
\end{array} 
$$
 given by 
$$\mu_{S_1,S_2}(\mhm_{A} \otimes \mhm_{B})=\sum_{C\vdash S: \atop{ C|_{S_1}=A, C|_{S_2}=B}} \mhm_C.$$
Also, its coproduct is
$$
\begin{array}{cccc}
\Delta_{S_1,S_2}:& {\bf \Pi}[S]& \rightarrow  & {\bf \Pi}[S_1] \otimes {\bf \Pi}[S_2] 
\end{array} 
$$
given by 
$$\Delta_{S_1,S_2}(\mhm_A)=\begin{cases}
\mhm_{A|_{S_1}} \otimes \mhm_{A|_{S_2}} & A=A|_{S_1}\sqcup A|_{S_2},\\
0 & \text{otherwise.}
\end{cases}$$
\begin{example}
We have that 
$$\mu_{\{1,2\},\{3,4,5\}}(\mhm_{12} \otimes \mhm_{35/4})=\mhm_{12/35/4}+\mhm_{1235/4}+\mhm_{124/35}$$
and 
$$\Delta_{\{1,2\},\{3,4,5\}}(\mhm_{12/35/4})=\mhm_{12}\otimes \mhm_{35/4}.$$
\end{example}

Two other bases of $\bPi$ are the $\phm$-basis, $\{\phm_A\}$, and the $\xhm$-basis, $\{\xhm_{A}\}$, defined by 
$$\phm_A=\sum_{B\geq A} \mhm_B\quad\quad\text{~and~}\quad \quad \xhm_A=\sum_{B\leq A} \mymu(B,A) \phm_B,$$ respectively. Note that $\phm_A=\sum_{B\leq A} \xhm_B.$

By \cite[Section 1.3]{BB}, the product and coproduct for \svw{the} $\phm$-basis are
$$\mu_{S_1,S_2}(\phm_A\otimes \phm_B)=\phm_{A\sqcup B}$$ and 
\begin{equation}\label{eq:cop-pi}
\Delta_{S_1,S_2}(\phm_A)=\begin{cases}
\phm_{A|_{S_1}} \otimes \phm_{A|_{S_2}} & A=A|_{S_1}\sqcup A|_{S_2},\\
0 & \text{otherwise.}
\end{cases} 
\end{equation} 
\begin{example}
We have that 
$$\mu_{\{1,2\},\{3,4,5\}}(\phm_{12} \otimes \phm_{35/4})=\phm_{12/35/4}$$
and 
$$\Delta_{\{1,2\},\{3,4,5\}}(\phm_{12/35/4})=\phm_{12}\otimes \phm_{35/4}.$$
\end{example}

\subsection{The Fock functor}
We have a category $\Sp$ where its objects are vector species, and its morphisms are natural transformations. The \emph{Fock functor} is 
$$\calK: \Sp \rightarrow \Vect_\bbQ$$ defined by 
$$\calK(\bP)=\bigoplus_{n\geq 0} \bP([n]).$$
When $\bP$ is a Hopf monoid, $\calK(\bP)$ is a Hopf algebra with the following product and coproduct. Let $v\in \bP([n])$ and $w\in \bP([m])$. We have an order preserving bijection ${\rm shift}$ between $[m]$ and $[n+1,m+n]=\{n+1,n+2,\dots, n+m\}$. 
%We let 
%$$
%\Shift(w)=\bP[f](w).
%$$
The product of $\calK(\bP)$ is defined as follows,
\begin{equation}\label{Eq:Pprod}
\begin{array}{cccc}
\mu: & \calK(\bP)\otimes  \calK(\bP)&  \rightarrow & \calK(\bP)\\
& v\otimes w & \mapsto & \mu_{[n],[n+1,m+n]}(v\otimes \bP[{\rm shift}](w)).
\end{array} 
\end{equation} 
  The coproduct of $\calK(\bP)$ is defined as follows,
\begin{equation}\label{Eq:Pcoprod}
\scriptstyle{
\begin{array}{cccc}
\Delta: & \calK(\bP)&  \rightarrow & \calK(\bP)\otimes  \calK(\bP)\\
& v & \mapsto & \sum_{[n]=S_1\sqcup S_2}\bP[\St]\otimes \bP[\St](\Delta_{S_1,S_2}(v)).
\end{array} 
}
\end{equation}

As an example, consider $\calK(\bPi)$. We have 
$$\calK(\bPi)  =\calK(\bPi)^0 \oplus \calK(\bPi)^1 \oplus \cdots$$ where $\calK(\bPi)^0 = \spam \{1\}$ and {the} $n$th graded piece for $n\geq 1$ has the following bases, which respectively are called the $n$th graded piece of the \emph{$\mhm$-, $\phm$-, and $\xhm$-basis of $\calK(\bPi)$},
$$\begin{array}{rclclcc}
\calK(\bPi)^n&=& \mathbb{Q}\text{-span}\{ \mhm_\pi \suchthat \pi\vdash [n]\} &=&\mathbb{Q}\text{-span}\{ \phm_\pi \suchthat \pi\vdash [n]\}
&=& \mathbb{Q}\text{-span}\{ \xhm_\pi \suchthat \pi\vdash [n]\}.
\end{array}$$

Given two set partitions $\sigma\vdash [n]$ and $\tau\vdash [m]$, we have
\begin{equation}\label{eq:SetProduct}
\begin{array}{cccc}
\mu: & \calK(\bPi)\otimes  \calK(\bPi)&  \rightarrow & \calK(\bPi)\\
& \phm_\sigma\otimes \phm_\tau & \mapsto & \mu_{[n],[n+1,m+n]}(\phm_\sigma \otimes \bPi[{\rm shift}]( \phm_\tau))=\phm_{\sigma|\tau}.
\end{array} 
\end{equation}
Also, for $\pi\vdash [n]$, 
\begin{equation}\label{eq:SetCoproduct}
\scriptstyle{
\begin{array}{cccc}
\Delta: & \calK(\svw{\bPi})&  \rightarrow & \calK(\svw{\bPi})\otimes  \calK(\svw{\bPi})\\
& \phm_\pi & \mapsto & \sum_{[n]= S_1\sqcup S_2}\bP[\St]\otimes \bP[\St](\Delta_{S_1,S_2}(\phm_\pi))=\sum_{[n]=S_1\sqcup S_2:\atop {\pi=\pi|_{S_1}\sqcup \pi|_{S_2}}}\phm_{\St(\pi|_{S_1})}\otimes \phm_{\St(\pi|_{S_2})}.
\end{array} 
}
\end{equation}
%Two other bases of $\calK(\bPi)$ are $\{\mhm_\pi\}$ and $\{
%\xhm_\pi\}$ that are defined by 
Therefore, by Equations (\ref{eq:prodNCSym}) and $(\ref{eq:coprodNCSym})$, we have that there is an isomorphism from  $ \calK(\bPi)$ to $\NCSym$  where through this isomorphism, for every $\pi\vdash [n]$, $\mhm_\pi$, $\phm_\pi$, and $\xhm_\pi$ in $\calK(\bPi)$ are mapped to  $\mnc_\pi$, $\pnc_\pi$, and $\xnc_\pi$ in $\NCSym$, respectively.

\section{The coproduct formula for \svw{the} $\xnc$-basis of $\NCSym$}\label{sec:coproduct}

%Two other bases are $\{\pnc_\pi\}$ and $\{\xnc_{\pi}\}$ defined by 
%$$\pnc_\pi=\sum_{\sigma\geq \pi} \mnc_\sigma\quad\quad\text{~and~}\quad \quad \xnc_\pi=\sum_{\sigma\leq \pi} \mu(\pi,\sigma) \pnc_\sigma.$$
%
%$$\pnc_\pi=\sum_{\sigma\leq \pi}  \xnc_\sigma.$$
%
%In this context $\sigma \geq \pi$ if the blocks of $\pi$ are subsets of blocks of $\sigma$.
%
%For $\pnc$-basis, the product and coproduct are given in \cite{} as follows:
%$$\mu_{S_1,S_2}(\ncp_\pi\otimes \ncp_\sigma)=\ncp_{\pi|\sigma},$$ and 
%$$\Delta_{S_1,S_2}(\pnc_\pi)=\begin{cases}
%\pnc_{\pi|_{S_1}} \otimes \pnc_{\pi|_{S_2}} & \pi=\pi|_{S_1}\sqcup \pi|_{S_2},\\
%0 & \text{otherwise.}
%\end{cases} $$

In the following theorem, we give our product and coproduct formulas for \svw{the} $\xhm$-basis of the Hopf monoid of set partitions $\bPi$. 
\begin{theorem}\label{thm:prod-coprod}
We have the following. 
\begin{enumerate} 
\item Let $S= S_1\sqcup S_2$, $A\vdash S_1$, and $B\vdash S_2$. Then $$\mu_{S_1,S_2}(\xhm_A\otimes \xhm_B)=\xhm_{A\sqcup B}.$$
\item Let $A\vdash S$. Then 
$$\Delta_{S_1,S_2}(\xhm_A)=\sum_{B\vdash S_1,C\vdash S_2} c_{B,C} \xhm_{B} \otimes \xhm_{C},$$
where 
$$c_{B,C}=\sum_{D\vdash S: \atop{D=D|_{S_1} \sqcup D|_{S_2}\leq A, \atop B \leq D|_{S_1},C\leq D|_{S_2}}} \mymu(D,A).$$
\end{enumerate} 
\end{theorem}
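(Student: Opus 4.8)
The plan is to prove both parts by passing to the $\phm$-basis, where the product and coproduct are simple, and then translating back through the triangular change of basis $\xhm_A=\sum_{B\leq A}\mymu(B,A)\phm_B$ together with its inverse $\phm_A=\sum_{B\leq A}\xhm_B$.

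For part (1), I would first expand each factor, writing $\xhm_A\otimes\xhm_B=\sum_{A'\leq A,\,B'\leq B}\mymu(A',A)\mymu(B',B)\,\phm_{A'}\otimes\phm_{B'}$, and apply the multiplicative rule $\mu_{S_1,S_2}(\phm_{A'}\otimes\phm_{B'})=\phm_{A'\sqcup B'}$ term by term. This produces $\sum_{A'\leq A,\,B'\leq B}\mymu(A',A)\mymu(B',B)\,\phm_{A'\sqcup B'}$. To identify this with $\xhm_{A\sqcup B}=\sum_{D\leq A\sqcup B}\mymu(D,A\sqcup B)\phm_D$, I would set up the bijection $(A',B')\mapsto A'\sqcup B'$ between pairs with $A'\leq A$, $B'\leq B$ and set partitions $D\leq A\sqcup B$: the blocks of $A\sqcup B$ are exactly the blocks of $A$ together with the blocks of $B$, so any $D\leq A\sqcup B$ has each block contained in a block of $A$ or of $B$, hence in $S_1$ or $S_2$, giving $D=D|_{S_1}\sqcup D|_{S_2}$ with $D|_{S_1}\leq A$ and $D|_{S_2}\leq B$. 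The one fact needing justification is the factorization $\mymu(A'\sqcup B',A\sqcup B)=\mymu(A',A)\mymu(B',B)$, which I would read off directly from Equation \eqref{eq:Mobius}: a block of $A\sqcup B$ coming from $A$ contains only blocks of $A'$ and one coming from $B$ contains only blocks of $B'$, so the product defining $\mymu(A'\sqcup B',A\sqcup B)$ splits as the corresponding products for $\mymu(A',A)$ and $\mymu(B',B)$.

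For part (2), I would expand $\xhm_A$ in the $\phm$-basis and apply $\Delta_{S_1,S_2}$ linearly, using the coproduct rule \eqref{eq:cop-pi}, which annihilates every term with $D\neq D|_{S_1}\sqcup D|_{S_2}$. This gives $\Delta_{S_1,S_2}(\xhm_A)=\sum_{D\leq A:\,D=D|_{S_1}\sqcup D|_{S_2}}\mymu(D,A)\,\phm_{D|_{S_1}}\otimes\phm_{D|_{S_2}}$. Next I would substitute the inverse change of basis on each tensor factor, $\phm_{D|_{S_1}}=\sum_{B\leq D|_{S_1}}\xhm_B$ and $\phm_{D|_{S_2}}=\sum_{C\leq D|_{S_2}}\xhm_C$, and interchange the order of summation to collect the coefficient of $\xhm_B\otimes\xhm_C$. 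That coefficient is precisely $\sum_D\mymu(D,A)$ taken over all $D\vdash S$ with $D=D|_{S_1}\sqcup D|_{S_2}\leq A$, $B\leq D|_{S_1}$, and $C\leq D|_{S_2}$, which is exactly $c_{B,C}$.

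Neither computation is deep: once the $\phm$-basis product and coproduct are in hand, both parts reduce to bookkeeping. The only genuinely non-formal step is the Möbius factorization in part (1), and I expect that to be the main (minor) obstacle; the summation interchange in part (2) is routine but must be set up carefully so that $B$ and $C$ range over all set partitions of $S_1$ and $S_2$ while the constraints on the inner variable $D$ are consolidated correctly.
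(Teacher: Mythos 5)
Your proposal is correct and follows essentially the same route as the paper: both parts are proved by expanding in the $\phm$-basis, applying the known product/coproduct rules there, and converting back via the M\"obius/zeta change of basis. The only difference is cosmetic: where the paper cites \cite[Lemma 4.2]{BHRZ} for the step identifying $\sum_{A'\leq A,\,B'\leq B}\mymu(A',A)\mymu(B',B)\phm_{A'\sqcup B'}$ with $\sum_{E\leq A\sqcup B}\mymu(E,A\sqcup B)\phm_E$, you spell out the underlying bijection and the M\"obius factorization explicitly, which is a valid (and slightly more self-contained) justification of the same step.
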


\begin{proof} (1) Using the same argument as 
\cite[Lemma 4.2]{BHRZ}, we have  
\begin{equation*}
\begin{split} 
\mu_{S_1,S_2}(\xhm_A\otimes \xhm_B)&=\sum_{C\leq A} \sum_{D\leq B} \mymu(C,A)\mymu(D,B)\mu_{S_1,S_2}(\svw{\phm_C}\otimes \phm_D)
\\
&=\sum_{C\leq A} \sum_{D\leq B} \mymu(C,A)\mymu(D,B)\svw{\phm_{C\sqcup D}}\\
&=\sum_{E\leq \svw{A\sqcup B}} {\mymu}(E,A\sqcup B) \phm_E=\xhm_{A\sqcup B}.
\end{split} 
\end{equation*}

(2) For \svw{the} coproduct formula, 
we have that 
\begin{equation*}
\begin{split}
\Delta_{S_1,S_2}(\xhm_A)&=\Delta_{S_1,S_2}\left(\sum_{D\leq A} \mymu(D,A) \phm_D\right)
\\
& =\sum_{D\leq A} \mymu(D,A) \Delta_{S_1,S_2}\left(\phm_D\right)\\
&=\sum_{D\leq A:\atop{D=D|_{S_1}\sqcup D|_{S_2}} } \mymu(D,A)\phm_{D|_{S_1}} \otimes \phm_{D|_{S_2}} \\&=\sum_{D\leq A:\atop {D=D|_{S_1}\sqcup D|_{S_2}} } \mymu(D,A)\sum_{{B\leq D|_{S_1}, C\leq D|_{S_2}} }\xhm_{B} \otimes \xhm_{C} 
\\
&=\sum_{D\vdash S: \atop{D=D|_{S_1}\sqcup D|_{S_2}\leq A, \atop {B\leq D|_{S_1}, C\leq D|_{S_2}}}} \mymu(D,A)\xhm_{B} \otimes \xhm_{C},
\end{split} 
\end{equation*} 
where the third equality is by Equation (\ref{eq:cop-pi}).
\end{proof} 

\begin{example}
We have that 
$$\mu_{\{1,2\},\{3,4,5\}}(\xhm_{12} \otimes \xhm_{35/4})=\xhm_{12/35/4}.$$
Let $A=123/4$. We want to compute 
$\Delta_{\{1,2\},\{3,4\}}(\xhm_{A}).$ 

For $B=12\vdash \{1,2\}$ and $C=3/4\vdash \{3,4\}$, the set partition $D=12/3/4$ satisfies $D=D|_{\{1,2\}}\sqcup D|_{\{3,4\}}\leq A$ and $B\leq D|_{\{1,2\}}, C\leq D|_{\{3,4\}}$. Also, $\mymu(D,A)=-1$. Thus, $c_{B,C}=-1$.

For $B=1/2\vdash \{1,2\}$ and $C=3/4\vdash \{3,4\}$, the set partitions $D=12/3/4, 1/2/3/4$ satisfy $D=D|_{\{1,2\}}\sqcup D|_{\{3,4\}}\leq A$ and $B\leq D|_{\{1,2\}}, C\leq D|_{\{3,4\}}$. Also, $\mymu(12/3/4,A)=-1$ and $\mymu(1/2/3/4,A)=2$. Thus, $c_{B,C}=1$.

For any other $(B,C)$ with $B\vdash \{1,2\}$ and $C\vdash \{3,4\}$, we have $c_{B,C}=0$. 

Therefore, 
$$\Delta_{\{1,2\},\{3,4\}}(\xhm_{123/4})=-\xhm_{12}\otimes \xhm_{3/4}+\xhm_{1/2}\otimes \xhm_{3/4}.$$
\end{example} 

%Two other bases are $\{\pnc_\pi\}$ and $\{\xnc_{\pi}\}$ defined by 
%$$\pnc_\pi=\sum_{\sigma\geq \pi} \mnc_\sigma\quad\quad\text{~and~}\quad \quad \xnc_\pi=\sum_{\sigma\leq \pi} \mu(\pi,\sigma) \pnc_\sigma.$$
%\begin{theorem}\label{thm:prod-coprod}
%The product and coproduct formulas for $\pnc$- and $\xnc$-bases are as follows.
%\begin{enumerate} 
%\item $$\mu_{S_1,S_2}(\ncp_\pi\otimes \ncp_\sigma)=\ncp_{\pi|\sigma}.$$
%\item $$\Delta_{S_1,S_2}(\pnc_\pi)=\begin{cases}
%\pnc_{\pi|_{S_1}} \otimes \pnc_{\pi|_{S_2}} & \pi=\pi|_{S_1}\sqcup \pi|_{S_2},\\
%0 & \text{otherwise.}
%\end{cases} $$
%\item $$\mu_{S_1,S_2}(\xnc_\pi\otimes \xnc_\sigma)=\xnc_{\pi|\sigma}.$$
%\item
%$$\Delta_{S_1,S_2}(\xnc_\pi)=\sum_{\sigma\vdash S_1,\tau\vdash S_2} c_{\sigma,\tau} \xnc_{\sigma} \otimes \xnc_{\tau},$$
%where 
%$$c_{\sigma,\tau}=\sum_{\omega|_{S_1} \sqcup \omega|_{S_2}\leq \pi:\atop \sigma \leq \omega|_{S_1},\tau\leq \omega|_{S_2}} \mu(\omega,\pi).$$
%\end{enumerate} 
%\end{theorem} 

%We find a coproduct formula for  $\xnc_{\pi}$ in $\NCSym$.
\svw{We now apply Theorem~\ref{thm:prod-coprod} to obtain a coproduct formula for the extra basis in $\NCSym$. Note that  to obtain this, we only need to establish a coproduct formula for $x_{\stn}$. This is because, firstly, for set partitions $\pi$ and $\sigma$,  we have by \cite[Theorem 1.3]{BHRZ} that
\begin{equation}\label{eq:slashp}
x_\pi x_\sigma = x_{\pi\slashp \sigma}. \end{equation}Secondly, for any set partition $\pi\vdash [n]$, there is a permutation $\eta\in \mathfrak{S}_n$ such that $\eta(\llbracket \lambda \rrbracket)=\pi$, and so by \cite[Equation (4.2)]{DvW}, $\eta\circ x_{\llbracket \lambda\rrbracket}=x_{\eta(\llbracket \lambda\rrbracket)}=x_\pi$. Therefore, we can use these two facts together with the corollary below to compute a coproduct formula for the value of the coproduct of $\NCSym$ at $x_{\pi}$ for any set partition $\pi$. 
}

\begin{corollary}\label{cor:coproduct}
In $\NCSym$, the coefficient of $x_\sigma\otimes x_\tau$ in $\Delta(x_{\stn})$ is 
$$ \sum_{ \nu\vdash [n], [n]=S_1\sqcup S_2: \atop { \nu= \nu|_{S_1}\sqcup  \nu|_{S_2}, \atop  \sigma\leq \St( \nu|_{S_1}), \tau\leq \St( \nu|_{S_2}) }}  (-1)^{\ell( \nu)-1} (\ell( \nu)-1)!.$$
\end{corollary}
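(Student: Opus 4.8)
The plan is to combine the coproduct formula for the $\xhm$-basis in the Hopf monoid $\bPi$ from Theorem~\ref{thm:prod-coprod}(2) with the Fock functor coproduct formula \eqref{Eq:Pcoprod}, and then read off the desired coefficient. As noted in the paragraph preceding the corollary, it suffices to treat $x_{\stn}$, and under the isomorphism $\calK(\bPi)\cong\NCSym$ the element $x_{\stn}$ corresponds to $\xhm_{\stn}$. So first I would write
$$\Delta(\xhm_{\stn})=\sum_{[n]=S_1\sqcup S_2}\bPi[\St]\otimes\bPi[\St]\bigl(\Delta_{S_1,S_2}(\xhm_{\stn})\bigr),$$
and substitute the expansion from Theorem~\ref{thm:prod-coprod}(2) with $A=\stn$. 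Because $\stn$ has a single block, the condition $D\leq\stn$ appearing in the formula for $c_{B,C}$ is automatic, and $\mymu(D,\stn)=(-1)^{\ell(D)-1}(\ell(D)-1)!$ by Equation~\eqref{eq:mymun}.

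Next I would use functoriality of $\bPi$ to push $\bPi[\St]$ through the $\xhm$-basis. Since $\St$ restricts to an order isomorphism on the relevant partition lattices and the Möbius function depends only on the poset structure (the Hopf-monoid analogue of Equation~\eqref{eq:actbasis}), we have $\bPi[\St](\xhm_B)=\xhm_{\St(B)}$. This turns the expression into
$$\Delta(\xhm_{\stn})=\sum_{[n]=S_1\sqcup S_2}\ \sum_{B\vdash S_1,\,C\vdash S_2}c_{B,C}\,\xhm_{\St(B)}\otimes\xhm_{\St(C)}.$$
To extract the coefficient of $x_\sigma\otimes x_\tau$ I would impose $\St(B)=\sigma$ and $\St(C)=\tau$, which forces $|S_1|$ and $|S_2|$ to equal the ground-set sizes of $\sigma$ and $\tau$, so that only decompositions $[n]=S_1\sqcup S_2$ of the matching cardinalities survive.

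The key observation, and the step I would treat most carefully, is that once $S_1$ is fixed the condition $\St(B)=\sigma$ determines $B$ uniquely as $B=\St^{-1}(\sigma)$, since $\St\colon S_1\to[|S_1|]$ is a fixed order-preserving bijection; likewise $C=\St^{-1}(\tau)$. Moreover, because $\St$ is an order isomorphism of partition lattices, the condition $B\leq D|_{S_1}$ holds if and only if $\sigma\leq\St(D|_{S_1})$, and similarly $C\leq D|_{S_2}$ if and only if $\tau\leq\St(D|_{S_2})$. Hence, for each decomposition $[n]=S_1\sqcup S_2$ and each $D\vdash[n]$ with $D=D|_{S_1}\sqcup D|_{S_2}$, the inner sum over admissible pairs $(B,C)$ contributes the single term $\mymu(D,\stn)$ exactly when $\sigma\leq\St(D|_{S_1})$ and $\tau\leq\St(D|_{S_2})$, and contributes nothing otherwise.

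Finally I would rename $D$ as $\nu$ and collect the surviving terms, which yields
$$\sum_{\substack{\nu\vdash[n],\ [n]=S_1\sqcup S_2:\\ \nu=\nu|_{S_1}\sqcup\nu|_{S_2},\ \sigma\leq\St(\nu|_{S_1}),\ \tau\leq\St(\nu|_{S_2})}}(-1)^{\ell(\nu)-1}(\ell(\nu)-1)!,$$
matching the claimed expression. The only genuine subtlety is the bookkeeping in the coefficient extraction just described, namely that each admissible $\nu$ contributes through a unique pair $(B,C)$; everything else is a direct substitution of the two quoted formulas together with the invariance of the Möbius function under the standardization maps.
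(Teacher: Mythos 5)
Your proposal is correct and follows essentially the same route as the paper's proof: reduce to $\xhm_{\stn}$ in $\calK(\bPi)$ via the isomorphism, expand using Equation~\eqref{Eq:Pcoprod} and Theorem~\ref{thm:prod-coprod}(2), push $\bPi[\St]$ through the $\xhm$-basis, and apply Equation~\eqref{eq:mymun}. Your extra care in the coefficient-extraction step (that each admissible $\nu$ contributes via a unique pair $(B,C)$, with $\St$ acting as an order isomorphism) is exactly the bookkeeping the paper handles implicitly in its last two equalities.
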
 

\begin{proof} We have seen that there is an isomorphism from $\calK(\bPi)$  to $\NCSym$, so that through this isomorphism $\xhm_\pi$ in $\calK(\bPi)$ maps to $x_\pi$ in $\NCSym$. Thus, in $\NCSym$, the coefficient of $x_\sigma\otimes x_\tau$ in $\Delta(x_{\stn})$ is equal to the  coefficient of $\xhm_\sigma\otimes \xhm_\tau$ in $\Delta(\xhm_{\stn})$, the value of the coproduct of $\calK(\bPi)$ at $\xhm_{\stn}$.
We have by Equation \eqref{Eq:Pcoprod},
\begin{equation*}
\begin{split} 
 \Delta(\xhm_{\stn})&= \sum_{[n]=S_1\sqcup S_2}{\bf \Pi}[\St]\otimes {\bf \Pi}[\St](\Delta_{S_1,S_2}(\xhm_{\stn})) 
 \\
 &=\sum_{[n]=S_1\sqcup S_2}{\bf \Pi}[\St]\otimes {\bf \Pi}[\St] \left( \sum_{\nu=\nu|_{S_1}\sqcup \nu|_{S_2} \atop {\sigma\leq \nu|_{S_1}, \tau\leq \nu|_{S_2}}} \mymu(\nu,\stn)\xhm_{\sigma} \otimes \xhm_{\tau}\right)
 \\
 & =\left( \sum_{ \nu\vdash [n],[n] = S_1\sqcup S_2:  \atop  \sigma \leq \nu|_{S_1}, \tau\leq \nu|_{S_2}}  (-1)^{\ell( \nu)-1} (\ell( \nu)-1)! \right) \bx_{\St(\sigma)} \otimes \bx_{\St(\tau)}
 \\
& =\left( \sum_{ \nu\vdash [n],[n] = S_1\sqcup S_2:  \atop  \sigma \leq \St( \nu|_{S_1}), \tau\leq \St(\nu|_{S_2}) }  (-1)^{\ell( \nu)-1} (\ell( \nu)-1)! \right) \bx_\sigma \otimes \bx_\tau,
 \end{split} 
\end{equation*}
where the second equality is by Theorem \ref{thm:prod-coprod}, \svw{and the third equality is by Equation~\eqref{eq:mymun}.}
%Equation (\ref{eq:SetCoproduct}).
\end{proof} 
%
%
%\begin{example} 
%When $\sigma=1$ and $\tau=1/2$. Then all $(\tau,S_1,S_2)$ that satisfy 
%$$ \tau\vdash [n],S_1\sqcup S_2=[n], \sigma \leq \St(\tau|_{S_1}), \pi\leq \St(\tau|_{S_2})$$ includes  
%$$(1/2/3, \{1\},\{2,3\}), (1/23, \{1\},\{2,3\}), (1/2/3, \{2\},\{1,3\}), (2/13, \{2\},\{1,3\}),$$$$(1/2/3, \{3\},\{1,2\}), (3/12, \{3\},\{1,2\}).$$
%Therefore, the coefficient of $x_{1}\otimes x_{1/2}$ in $\Delta(x_{[3]})$ is 
%
%$$(-1)^{\ell(1/2/3)}(\ell(1/2/3)-1)!+ (-1)^{\ell(1/23)}(\ell(1/23)-1)!+$$$$(-1)^{\ell(1/2/3)}(\ell(1/2/3)-1)!+ (-1)^{\ell(2/13)}(\ell(2/13)-1)!+$$
%$$
%(-1)^{\ell(1/2/3)}(\ell(1/2/3)-1)!+ (-1)^{\ell(3/12)}(\ell(3/12)-1)!=
%$$
%$$
%2+(-1)+2+(-1)+2+(-1)=3.
%$$
%
%\end{example} 
%

\begin{example} 
Let $\sigma=1/2$ and $\tau=12$. We want to compute the coefficient of $x_{1/2}\otimes x_{12}$ in $\Delta(x_{\llbracket 4\rrbracket})$. All tuples $(\nu,S_1,S_2)$ that satisfy 
$$ \nu\vdash [4], [4]= S_1\sqcup S_2, 1/2 \leq \St(\nu|_{S_1}), 12\leq \St(\nu|_{S_2})$$ are  
$$(1/2/34, \{1,2\},\{3,4\}), (12/34, \{1,2\},\{3,4\}), (1/3/24, \{1,3\},\{2,4\}), (13/24, \{1,3\},\{2,4\}),$$ 
$$ (1/4/23, \{1,4\},\{2,3\}), (14/23, \{1,4\},\{2,3\}), (2/3/14, \{2,3\},\{1,4\}), (23/14, \{2,3\},\{1,4\}),$$
$$(2/4/13, \{2,4\},\{1,3\}), (24/13, \{2,4\},\{1,3\}),(3/4/12, \{3,4\},\{1,2\}), \svw{(34/12, \{3,4\},\{1,2\}).}$$
Therefore, the coefficient of $x_{1/2}\otimes x_{12}$ in $\Delta(x_{\llbracket 4 \rrbracket})$ is 
$$
6 \times (2+(-1))=6. 
$$
\end{example}

We conclude this section with the following proposition, 
\svw{after a definition. Let the \emph{extra symmetric function in $\Sym$}, $x_\lambda$, be given by
\begin{equation}\label{eq:xinsym}x_\lambda=x_{\lambda(\llbracket \lambda \rrbracket)}=\rho(x_{\llbracket \lambda \rrbracket}).\end{equation}}

\begin{proposition}\label{prop:xbasisSym}
The set $\{x_{\lambda}: \lambda \vdash n\}$ is a basis for $\Sym^n$. Moreover, if \svw{$\lambda$ and $\gamma$ are partitions,} then 
$$\svw{x_\lambda  x_\gamma=x_{\lambda|\gamma}.}$$
\end{proposition}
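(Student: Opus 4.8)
The proposition has two assertions, and I would handle the multiplicativity first, since it is the quickest and supports the basis claim. The key auxiliary fact I would record at the outset is that $\rho(x_\pi)$ depends only on the shape $\lambda(\pi)$ of $\pi$; precisely, $\rho(x_\pi) = x_{\lambda(\pi)}$ for every set partition $\pi$. The point is that letting the variables commute is blind to the $\mathfrak{S}_n$-action: for any monomial, $\rho(\eta\circ(\bx_{i_1}\cdots\bx_{i_n})) = \rho(\bx_{i_1}\cdots\bx_{i_n})$, and hence $\rho(\eta\circ f)=\rho(f)$ for all $f$ and all permutations $\eta$. Choosing $\eta$ with $\eta(\llbracket\lambda(\pi)\rrbracket)=\pi$ and invoking Equation \eqref{eq:actbasis} gives $\rho(x_\pi)=\rho(\eta\circ x_{\llbracket\lambda(\pi)\rrbracket})=\rho(x_{\llbracket\lambda(\pi)\rrbracket})=x_{\lambda(\pi)}$ by \eqref{eq:xinsym}.

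Multiplicativity then follows in one line. Since $\rho$ is an algebra morphism and the product rule \eqref{eq:slashp} holds in $\NCSym$, we get $x_\lambda x_\gamma = \rho(x_{\llbracket\lambda\rrbracket})\,\rho(x_{\llbracket\gamma\rrbracket}) = \rho(x_{\llbracket\lambda\rrbracket}\,x_{\llbracket\gamma\rrbracket}) = \rho(x_{\llbracket\lambda\rrbracket\mid\llbracket\gamma\rrbracket})$. The set partition $\llbracket\lambda\rrbracket\mid\llbracket\gamma\rrbracket$ has block sizes $\lambda_1,\dots,\lambda_l,\gamma_1,\dots,\gamma_k$, so its shape is $\lambda\mid\gamma$; by the auxiliary fact this last expression equals $x_{\lambda\mid\gamma}$, as desired.

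For the basis claim I would expand $x_\lambda$ in the power sum basis of $\Sym$ and exhibit a unitriangular change of basis. Combining \eqref{eq:p-basis} with Lemma \ref{lem:RSrho}(2) yields $x_\lambda = \rho(x_{\llbracket\lambda\rrbracket}) = \sum_{\sigma\le\llbracket\lambda\rrbracket}\mymu(\sigma,\llbracket\lambda\rrbracket)\,p_{\lambda(\sigma)}$. As $\sigma$ ranges over the refinements of $\llbracket\lambda\rrbracket$, the shape $\lambda(\sigma)$ ranges over the refinements of $\lambda$ in the refinement order on integer partitions, and $\lambda(\sigma)=\lambda$ forces $\sigma=\llbracket\lambda\rrbracket$, whose coefficient $\mymu(\llbracket\lambda\rrbracket,\llbracket\lambda\rrbracket)$ equals $1$. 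Thus $x_\lambda = p_\lambda + \sum_{\mu}c_{\lambda\mu}\,p_\mu$, summed over partitions $\mu$ that strictly refine $\lambda$. This transition from $\{p_\lambda : \lambda\vdash n\}$ is triangular with respect to refinement and has unit diagonal, hence invertible; since $\{p_\lambda\}$ is a basis of $\Sym^n$ and the index sets agree, $\{x_\lambda : \lambda\vdash n\}$ is a basis. (Alternatively, the $n=\ell(\lambda)=1$ case of this expansion reads $x_n = p_n + (\text{a polynomial in }p_1,\dots,p_{n-1})$, so the $x_n$ are algebraically independent generators of $\Sym$, and multiplicativity $x_\lambda = x_{\lambda_1}\cdots x_{\lambda_l}$ then produces a basis directly.)

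There is no deep obstacle here; the proof is essentially bookkeeping. The two points requiring care are the invariance $\rho(\eta\circ f)=\rho(f)$ under the symmetric-group action — which is what lets $\rho(x_\pi)$ depend only on the shape and reconciles $\llbracket\lambda\rrbracket\mid\llbracket\gamma\rrbracket$ with $\llbracket\lambda\mid\gamma\rrbracket$ — and the verification that the power-sum expansion is genuinely unitriangular, namely that $\llbracket\lambda\rrbracket$ is the unique refinement of itself having shape exactly $\lambda$.
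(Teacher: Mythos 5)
Your proof is correct and follows essentially the same route as the paper: multiplicativity via $\rho$ being an algebra morphism together with Equation~\eqref{eq:slashp}, and the basis claim via the unitriangular expansion $x_\lambda=\sum_{\sigma\leq\llbracket\lambda\rrbracket}\mymu(\sigma,\llbracket\lambda\rrbracket)p_{\lambda(\sigma)}$ with $\mymu(\llbracket\lambda\rrbracket,\llbracket\lambda\rrbracket)=1$. Your explicit auxiliary fact that $\rho(x_\pi)=x_{\lambda(\pi)}$ for every set partition $\pi$ is a welcome addition, since the paper uses this silently in the step $\rho(x_{\llbracket\lambda\rrbracket\mid\llbracket\gamma\rrbracket})=x_{\lambda(\llbracket\lambda\rrbracket\mid\llbracket\gamma\rrbracket)}$, where $\llbracket\lambda\rrbracket\mid\llbracket\gamma\rrbracket$ need not equal $\llbracket\lambda\mid\gamma\rrbracket$.
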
 

\begin{proof}
The set $\{x_{\lambda}: \lambda \vdash n\}$ is a basis for $\Sym^n$ \svw{because} $\mymu(\llbracket \lambda \rrbracket,\llbracket \lambda \rrbracket)=1$ and \svw{by definition and Lemma~\ref{lem:RSrho}}
$$x_\lambda=\rho(x_{\llbracket \lambda \rrbracket})=\sum_{\sigma\leq  \llbracket \lambda \rrbracket} \mymu(\sigma, \llbracket \lambda \rrbracket) p_{\lambda(\sigma)}.$$ Also, if \svw{$\lambda$ and $\gamma$ are partitions,}  \svw{then by Equation~\eqref{eq:slashp}} and the fact that $\rho$ is an algebra morphism we have that $$\svw{x_\lambda  x_\gamma=\rho(x_{\llbracket \lambda \rrbracket})\rho(x_{\llbracket \gamma \rrbracket})= \rho(x_{\llbracket \lambda \rrbracket}x_{\llbracket \gamma \rrbracket})= \rho(x_{\llbracket \lambda \rrbracket \slashp\llbracket \gamma \rrbracket})= x_{\lambda(\llbracket \lambda \rrbracket \slashp\llbracket \gamma \rrbracket)}= x_{\lambda|\gamma}.}$$\end{proof} 
\svw{\begin{example}\label{ex:sbasisSym}
$x_{3211}x_{221}= x_{3222211}$
\end{example}}
\section{\svw{Changes} of bases}\label{sec:change} 
Given any basis $\{b\}$  of a Hopf algebra $H$, we say an element $h$ in $H$ is \svw{\emph{$b$-positive}} (\svw{respectively} \svw{\emph{$b$-negative}}) if all coefficients appearing in the expansion of $h$ in terms of the basis $\{b\}$ are positive (\svw{respectively} negative).
In this section, we describe the expansion of the $x$-basis elements in terms of \svw{the} $m$-, $p$-, and $e$-basis of $\NCSym$, and we investigate if these expansions are positive or negative. 
%We give an expansion of $x$-basis in terms of $m$-basis where the coefficients are the number of acyclic orientations of a certain family of graphs with exactly one sink, and moreover, $x_\pi$ is either $m$-positive or $m$-negative. Then we show that  for every set partition $\pi$ of $[n]$, $\omega(x_\pi)$ is either $p$-positive or $p$-negative. 
In the end, we 
%conjecture that $x_\pi$ is either $e$-positive or $e$-negative, and 
prove that $x_{\stn}$ is $e$-positive in $\NCSym$ if and only if $x_n$ is $e$-positive in $\Sym$.

In order to write  $\xnc_\pi$ in terms of \svw{the} $\mnc$-basis, we need the following definitions. \svw{Given a graph $G$,
 a} \emph{stable partition} of $G$ is a partition of the vertices of $G$ such that the vertices in each block are independent. Let $S(G)$ be the set of all stable partitions of $G$.  Note that the \emph{chromatic polynomial} $\chi_G(k)$, which is the number of proper colourings of the graph $G$ with $k$ colours,  can be computed as 
\begin{equation}\label{eq:chi}\chi_G(k)=\sum_{\pi\in S(G)} k(k-1)\cdots (k-\ell(\pi)+1).\end{equation}
For any set partition $\sigma$ of $[n]$, define $K_\sigma$ to be the complete multipartite graph on vertices $[n]$ with an edge between $i$ and $j$ if and only if they lie in distinct blocks of $\sigma$.

\begin{theorem}\label{thm:x-to-m}
Let $n\geq 1$. Then 
$$\xnc_{\stn}=(-1)^{n-1}\sum_{\sigma\vdash [n]} c_\sigma \mnc_\sigma$$ where $c_\sigma$ is the number of acyclic orientations of $K_\sigma$ with a unique sink at any fixed vertex. 
\end{theorem}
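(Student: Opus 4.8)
The plan is to compute the coefficient of $\mnc_\sigma$ in $\xnc_{\stn}$ directly and then reinterpret it combinatorially. First I would start from the definition in Equation~\eqref{eq:p-basis}, which together with Equation~\eqref{eq:mymun} gives $\xnc_{\stn}=\sum_{\tau\vdash[n]}(-1)^{\ell(\tau)-1}(\ell(\tau)-1)!\,\pnc_\tau$, since every $\tau\vdash[n]$ satisfies $\tau\leq\stn$. Next I would rewrite each $\pnc_\tau$ in the $\mnc$-basis using $\pnc_\tau=\sum_{B\geq\tau}\mnc_B$, which is the image under the Fock functor of the relation $\phm_A=\sum_{B\geq A}\mhm_B$ recorded earlier. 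Interchanging the order of summation, the coefficient of $\mnc_\sigma$ becomes $\sum_{\tau\leq\sigma}(-1)^{\ell(\tau)-1}(\ell(\tau)-1)!$, a sum over all set partitions $\tau$ refining $\sigma$.

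The key observation is that this sum is exactly the linear coefficient of the chromatic polynomial $\chi_{K_\sigma}(k)$. Indeed, a partition $\tau$ is a stable partition of $K_\sigma$ precisely when each of its blocks is independent in $K_\sigma$; since $i$ and $j$ are adjacent in $K_\sigma$ if and only if they lie in different blocks of $\sigma$, this happens exactly when every block of $\tau$ lies inside a block of $\sigma$, that is, $\tau\leq\sigma$. Hence $S(K_\sigma)=\{\tau:\tau\leq\sigma\}$, and Equation~\eqref{eq:chi} gives $\chi_{K_\sigma}(k)=\sum_{\tau\leq\sigma}k(k-1)\cdots(k-\ell(\tau)+1)$. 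The coefficient of $k$ in the falling factorial $k(k-1)\cdots(k-\ell+1)$ is the signed Stirling number of the first kind $(-1)^{\ell-1}(\ell-1)!$, so extracting the $k^1$ term yields $[k]\chi_{K_\sigma}(k)=\sum_{\tau\leq\sigma}(-1)^{\ell(\tau)-1}(\ell(\tau)-1)!$, which matches the coefficient computed in the first paragraph.

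Finally, I would invoke the theorem of Greene and Zaslavsky relating acyclic orientations to the chromatic polynomial: for a graph on $p$ vertices, the number of acyclic orientations with a unique sink at a fixed vertex equals $(-1)^{p-1}$ times the linear coefficient of its chromatic polynomial, and in particular this count is independent of the chosen vertex. Applying this to $K_\sigma$, which has $n$ vertices, gives $c_\sigma=(-1)^{n-1}[k]\chi_{K_\sigma}(k)$, so that $[\mnc_\sigma]\xnc_{\stn}=(-1)^{n-1}c_\sigma$, which is the claimed formula. The step requiring the most care is the Greene--Zaslavsky interpretation, including the degenerate case $\sigma=\stn$: there $K_\sigma$ has no edges, and for $n\geq 2$ both the linear coefficient of $\chi_{K_\sigma}(k)=k^n$ and the count $c_{\stn}$ vanish, consistent with the formula. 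The remaining manipulations with Stirling numbers and the refinement order are routine, so the substantive content lies in identifying the $\mnc$-coefficient as $[k]\chi_{K_\sigma}(k)$ and in the acyclic-orientation interpretation of that coefficient.
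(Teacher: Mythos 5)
Your proposal is correct and follows essentially the same route as the paper's proof: expand $\xnc_{\stn}$ in the $\pnc$-basis via Equations~\eqref{eq:p-basis} and \eqref{eq:mymun}, pass to the $\mnc$-basis via $\pnc_\tau=\sum_{B\geq\tau}\mnc_B$, identify the resulting coefficient $\sum_{\tau\leq\sigma}(-1)^{\ell(\tau)-1}(\ell(\tau)-1)!$ with the linear coefficient of $\chi_{K_\sigma}$ through stable partitions and Equation~\eqref{eq:chi}, and conclude by the Greene--Zaslavsky theorem (cited in the paper as \cite[Theorem 8]{EM}). Your explicit check of the disconnected case $\sigma=\stn$ is a welcome addition but not a substantive departure.
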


\begin{proof}
We have \svw{by \cite[Theorem 3.1]{RS}} that 
$$\pnc_\tau=\sum_{\sigma\geq \tau} \mnc_\sigma.$$
Therefore, \svw{by definition}
$$\xnc_\pi=\sum_{\tau\leq \pi} \mymu(\tau,\pi) \pnc_\tau=\sum_{\tau\leq \pi} \mymu(\tau,\pi) \sum_{\sigma\geq \tau} \mnc_\sigma=\sum_{\sigma\vdash [n]} \left(\sum_{\tau\leq \pi\wedge \sigma}\mymu(\tau,\pi) \right) \mnc_\sigma.$$
And so \svw{by Equation~\eqref{eq:mymun}}
$$\xnc_{\stn}=\sum_{\sigma\vdash [n]} \left(\sum_{\tau\leq \sigma}\mymu(\tau,\stn) \right) \mnc_\sigma=\sum_{\sigma\vdash [n]} \left(\sum_{\tau\leq \sigma}(-1)^{\ell(\tau)-1}(\ell(\tau)-1)! \right) \mnc_\sigma.$$
Thus, for any $\sigma\vdash [n]$, the coefficient of $\mnc_\sigma$ in $\xnc_{\stn}$ is 
\begin{equation*}
\begin{split}
\sum_{\tau\leq \sigma}(-1)^{\ell(\tau)-1}(\ell(\tau)-1)!&=\sum_{\tau\in S(K_\sigma)} (-1)^{\ell(\tau)-1}(\ell(\tau)-1)!\\
&= \left(\sum_{\tau\in S(K_\sigma)} (0-1) (0-2)\cdots (0-(\svw{\ell(\tau)}-1)) \right)\\
&= \left( \frac{1}{k} \chi_{K_\sigma} \right)(0),
\end{split}
\end{equation*}\svw{by Equation~\eqref{eq:chi}.}
%which is the coefficient of $k$ in $\chi_{K_\sigma}(k)$. 
By \cite[Theorem 8]{EM}, this is the number of acyclic orientations of $K_\sigma$ with
a unique sink at any fixed vertex, multiplied by $(-1)^{n-1}$.
\end{proof}

\begin{example}\label{ex:xasm}
We have that $$\xnc_{\llbracket 3 \rrbracket}=2\mnc_{1/2/3} + \mnc_{1/23} + \mnc_{12/3} + \mnc_{13/2}.$$
\end{example} 

\svw{Now we turn to the $p$-basis in $\NSym$.}
\begin{proposition}\label{prop:xasp} We have the following.
\begin{enumerate} 
\item $\xnc_{\stn}=\sum_{\sigma\vdash[n]}(-1)^{\ell(\sigma)-1}(\ell(\sigma)-1)! \pnc_\sigma.$
\item $\omega(\xnc_{\stn})=(-1)^{n-1} \sum_{\sigma\vdash[n]}(\ell(\sigma)-1)! \pnc_\sigma$.
\item %$\omega(\xnc_{\pi})=(-1)^{n-\ell(\pi)} \sum_{\sigma\leq \pi}(\lambda(\sigma,\pi)-1)! \pnc_\sigma$.
$\omega(\xnc_{\pi})$ is either $p$-positive or $p$-negative.
\end{enumerate} 
\end{proposition}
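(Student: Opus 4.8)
The plan is to prove the three parts in order, using the earlier results as building blocks. For part (1), I would start from the definition of $\xnc_{\stn}$ in Equation~\eqref{eq:p-basis}, namely $\xnc_{\stn}=\sum_{\sigma\leq \stn}\mymu(\sigma,\stn)\pnc_\sigma$. Since every set partition $\sigma\vdash[n]$ satisfies $\sigma\leq \stn$ (the full partition $\stn$ is the top element of the lattice $\Pi[[n]]$), the sum ranges over all $\sigma\vdash[n]$. Then I substitute the explicit value of the M\"obius function from Equation~\eqref{eq:mymun}, which gives $\mymu(\sigma,\stn)=(-1)^{\ell(\sigma)-1}(\ell(\sigma)-1)!$, yielding the stated formula immediately.

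For part (2), I would apply the involution $\omega$ to the result of part (1). Using the definition of $\omega$ on the $p$-basis from Equation~\eqref{eq:omega}, namely $\omega(\pnc_\sigma)=(-1)^{n-\ell(\sigma)}\pnc_\sigma$ (here each $\sigma\vdash[n]$ so the underlying integer is $n$), I compute
\begin{equation*}
\omega(\xnc_{\stn})=\sum_{\sigma\vdash[n]}(-1)^{\ell(\sigma)-1}(\ell(\sigma)-1)!\,(-1)^{n-\ell(\sigma)}\pnc_\sigma.
\end{equation*}
The sign combines as $(-1)^{\ell(\sigma)-1}(-1)^{n-\ell(\sigma)}=(-1)^{n-1}$, which is independent of $\sigma$, so it factors out of the sum and gives the claimed expression $(-1)^{n-1}\sum_{\sigma\vdash[n]}(\ell(\sigma)-1)!\,\pnc_\sigma$.

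For part (3), the key observation is that in the formula from part (2) every coefficient $(\ell(\sigma)-1)!$ is strictly positive, so the global sign $(-1)^{n-1}$ determines positivity or negativity uniformly: $\omega(\xnc_{\stn})$ is $p$-positive when $n$ is odd and $p$-negative when $n$ is even. To pass from $\xnc_{\stn}$ to a general $\xnc_\pi$, I would use Equation~\eqref{eq:actbasis} together with the permutation action: for any $\pi\vdash[n]$ of shape $\lambda$, there is $\eta\in\mathfrak{S}_n$ with $\eta(\llbracket\lambda\rrbracket)=\pi$, and the slash-product relation $\xnc_{\llbracket\lambda\rrbracket}=\xnc_{\llbracket\lambda_1\rrbracket}\cdots\xnc_{\llbracket\lambda_\ell\rrbracket}$ from Equation~\eqref{eq:slashp}. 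The main subtlety, and the step I expect to require the most care, is checking that $\omega$ interacts well with both the permutation action $\eta\circ(-)$ and the slash product, so that $\omega(\xnc_\pi)=\eta\circ\omega(\xnc_{\llbracket\lambda\rrbracket})$ and $\omega$ respects products; since $\omega$ acts by a scalar on each $p$-basis element depending only on $n-\ell$, and the permutation action sends $\pnc_\sigma$ to $\pnc_{\eta(\sigma)}$ preserving the number of blocks $\ell(\sigma)=\ell(\eta(\sigma))$, the global sign $(-1)^{n-1}$ is preserved under $\eta\circ(-)$, and multiplicativity via the slash product multiplies signs consistently. Hence $\omega(\xnc_\pi)$ retains a single overall sign in its $p$-expansion with all remaining coefficients positive, making it $p$-positive or $p$-negative accordingly.
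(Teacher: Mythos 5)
Your proposal is correct and follows essentially the same route as the paper: parts (1) and (2) are the direct substitution of Equations~\eqref{eq:mymun} and~\eqref{eq:omega} into Equation~\eqref{eq:p-basis}, and part (3) reduces $\omega(\xnc_\pi)$ to $\omega(\xnc_{\llbracket\lambda\rrbracket})=\omega(x_{\llbracket\lambda_1\rrbracket})\cdots\omega(x_{\llbracket\lambda_l\rrbracket})$ via a permutation $\eta$ and the slash product, exactly as in the paper. Your explicit checks that $\omega$ commutes with the action $\eta\circ(-)$ and is multiplicative (both because $\omega$ scales $\pnc_\sigma$ by a sign depending only on $n-\ell(\sigma)$, which is preserved by relabelling and additive under the slash product) are details the paper leaves implicit, so nothing is missing.
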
 

\begin{proof}
Part (1) follows from Equations \svw{(\ref{eq:mymun})} and (\ref{eq:p-basis}). Part (2) follows from part (1) and the fact that $\omega(\pnc_{\sigma})=(-1)^{n-\ell(\sigma)}\pnc_{\sigma}.$ Given the set partition $\pi$ with $\lambda(\pi)=\lambda=\lambda_1\lambda_2 \svw{\cdots} \lambda_l$, \svw{we know} there is a permutation $\eta$ such that $\eta(\llbracket \lambda \rrbracket)=\pi$. By the definition of the involution $\omega$ in Equation (\ref{eq:omega}), we have $\eta(\omega(\xnc_{\llbracket \lambda \rrbracket}))=\omega(\xnc_{\eta(\llbracket \lambda \rrbracket)})=\omega(\xnc_\pi)$. Therefore, $\omega(\xnc_\pi)$ is $p$-positive (\svw{respectively} $\pnc$-negative) if and only if $\omega(\xnc_{\llbracket \lambda \rrbracket})$ is $p$-positive (\svw{respectively} $\pnc$-negative). \svw{However,} from part (2) and the fact that 
$$\omega(\xnc_{\llbracket \lambda \rrbracket})=\omega( x_{\llbracket \lambda_1\rrbracket})\omega(x_{\llbracket \lambda_2\rrbracket })\cdots\omega(x_{\llbracket \lambda_l\rrbracket}),$$ we conclude that $\omega(\xnc_{\llbracket \lambda \rrbracket})$ is either $p$-positive or $p$-negative.
\end{proof}

\svw{Finally, we} write the expansion of $\xnc_\pi$ in terms of \svw{the} $e$-basis of $\NCSym$, \svw{which leads us to a conjecture.} Let $\hat{0}=1/2/\cdots/n$. We have by \cite[Theorem 3.4]{RS} that 
$$
p_\tau=\frac{1}{\mymu(\hat{0},\tau)}\sum_{\sigma\leq \tau} u(\sigma,\tau)e_\sigma.
$$
Therefore, \svw{by definition}
\begin{equation*}
\begin{split}
x_\pi=\sum_{\tau\leq \pi} \mymu(\tau,\pi) p_\tau&=\sum_{\tau\leq \pi} \mymu(\tau,\pi)\frac{1}{\mymu(\hat{0},\tau)}\sum_{\sigma\leq \tau} \mymu(\sigma,\tau)e_{\sigma}\\
%&=\sum_{\sigma\leq \tau\leq \pi} \mymu(\tau,\pi)\frac{1}{\mymu(\emptyset,\tau)} \mymu(\sigma,\tau)e_{\sigma}\\
&=\sum_{\sigma\vdash [n]}\left( \sum_{\sigma\leq \tau \leq \pi}\frac{\mymu(\tau,\pi)\mymu(\sigma,\tau)}{\mymu(\hat{0},\tau)}  \right) \svw{e_\sigma.} \\
%& =\sum_{\sigma\vdash [n]}\left(\sum_{\sigma\leq \tau \leq \pi} (-1)^{n-\ell(\tau)-\ell(\pi)-\ell(\sigma)} \frac{(\lambda(\tau,\pi)-1)!(\lambda(\tau,\sigma)-1)!}{(\lambda(\tau)-1)! }\right)e_\sigma.
\end{split}
\end{equation*}

%
%Computing the number 
%$$\sum_{\tau\in [\sigma,\pi]} (-1)^{n-\ell(\tau)-\ell(\pi)+\ell(\sigma)} \frac{(\lambda(\tau,\pi)-1)!(\lambda(\tau,\sigma)-1)!}{(\lambda(\tau)-1)! }$$ using SageMath, we have the following conjecture.
%
%
%%%%%
%%%%%
%%%%%

%
%We now write the expansion of $\xnc_\pi$ in terms of $e$-basis of $\NCSym$. We have that by \cite[Theorem 3.4]{RS}
%\begin{equation*}
%\begin{split}
%x_\pi=\sum_{\sigma\leq \pi} \mymu(\sigma,\pi) p_\sigma&=\sum_{\sigma\leq \pi} \mymu(\sigma,\pi)\frac{1}{\mymu(\emptyset,\sigma)}\sum_{\tau\leq \sigma} \mymu(\tau,\sigma)e_{\tau}\\
%&=\sum_{\tau\leq \sigma\leq \pi} \mymu(\sigma,\pi)\frac{1}{\mymu(\emptyset,\sigma)} \mymu(\tau,\sigma)e_{\tau}\\
%&=\sum_{\tau\leq \sigma\leq \pi} \mymu(\sigma,\pi)\frac{1}{\mymu(\emptyset,\sigma)} \mymu(\tau,\sigma)e_{\tau}\\
%&=\sum_{\tau\leq \pi}\left( \sum_{\sigma\in [\tau,\pi]}\frac{\mymu(\sigma,\pi)\mymu(\tau,\sigma)}{\mymu(\emptyset,\sigma)}  \right) e_\tau \\
%& =\sum_{\tau\leq \pi}\left(\sum_{\sigma\in [\tau,\pi]} (-1)^{n-\ell(\sigma)-\ell(\pi)+\ell(\tau)} \frac{(\lambda(\sigma,\pi)-1)!(\lambda(\sigma,\tau)-1)!}{(\lambda(\sigma)-1)! }\right)e_\tau.
%\end{split}
%\end{equation*}
%
%
%Computing the number 
%$$\sum_{\sigma\in [\tau,\pi]} (-1)^{n-\ell(\sigma)-\ell(\pi)+\ell(\tau)} \frac{(\lambda(\sigma,\pi)-1)!(\lambda(\sigma,\tau)-1)!}{(\lambda(\sigma)-1)! }$$ using SageMath, we have the following conjecture.

\begin{conjecture}\label{conj}
Let $\sigma\vdash [n]$. \svw{Then} the coefficient of $e_\sigma$ in the expansion of $x_\stn$ in terms of \svw{the} $e$-basis \svw{is positive if $n-1$ is even, and negative if $n-1$ is odd.} \end{conjecture}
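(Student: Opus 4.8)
The plan is to reduce the conjecture, via the commutative image $\rho$, to an $e$-positivity statement in $\Sym$, then dualise it to a manifestly combinatorial $h$-positivity statement, thereby isolating the one genuinely hard step.

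First I would observe that the coefficient $d_\sigma$ of $\enc_\sigma$ in $x_{\stn}$ depends only on the shape $\lambda(\sigma)$. Indeed $\eta(\stn)=\stn$ for every $\eta\in\mathfrak{S}_n$, so $\eta\circ x_{\stn}=x_{\eta(\stn)}=x_{\stn}$ by Equation~\eqref{eq:actbasis}; applying $\eta\circ(-)$ to $x_{\stn}=\sum_\sigma d_\sigma\enc_\sigma$ and using $\eta\circ\enc_\sigma=\enc_{\eta(\sigma)}$ gives $d_\sigma=d_{\eta(\sigma)}$, whence $d_\sigma=d_{\lambda(\sigma)}$. Applying $\rho$ and Lemma~\ref{lem:RSrho}(3), and counting the $\tfrac{n!}{\lambda!\,\lambda^!}$ set partitions of each shape, yields
$$x_n=\rho(x_{\stn})=\sum_{\lambda\vdash n}\frac{n!}{\lambda^!}\,d_\lambda\,e_\lambda .$$
Thus the sign of $d_\sigma$ equals the sign of the $e_{\lambda(\sigma)}$-coefficient of $x_n$, so the conjecture is equivalent to the assertion that $(-1)^{n-1}x_n$ is $e$-positive in $\Sym$. (The coefficient indexed by $\hat 0=1/2/\cdots/n$ is $0$ for $n\ge 2$, so ``positive/negative'' must be read as ``sign $(-1)^{n-1}$, or zero''.)

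Next I would dualise. Applying $\rho$ and Lemma~\ref{lem:RSrho}(2) to Proposition~\ref{prop:xasp}(1) and grouping by shape gives $x_n=\sum_{\lambda\vdash n}\tfrac{n!\,(-1)^{\ell(\lambda)-1}(\ell(\lambda)-1)!}{\lambda!\,\lambda^!}p_\lambda$. Writing $h_\lambda$ for the complete homogeneous symmetric function and using the classical identity $\omega(e_\lambda)=h_\lambda$ together with Equation~\eqref{eq:omega}, the $e$-positivity of $(-1)^{n-1}x_n$ is equivalent to the $h$-positivity of
$$Y_n:=(-1)^{n-1}\omega(x_n)=\sum_{\lambda\vdash n}\frac{n!\,(\ell(\lambda)-1)!}{\lambda!\,\lambda^!}\,p_\lambda=\sum_{\pi\vdash[n]}(\ell(\pi)-1)!\prod_{B\in\pi}p_{|B|}.$$
A short exponential-generating-function computation gives $\sum_{n\ge 1}Y_n\,t^n/n!=-\log\!\bigl(1-\sum_{i\ge 1}p_i\,t^i/i!\bigr)$, which presents $Y_n$ as the Frobenius characteristic of the species of cyclically ordered set partitions of $[n]$ with each block monochromatically coloured; the small cases $Y_2=2h_2$, $Y_3=3h_3+3h_{21}$, $Y_4=4h_4+8h_{31}+10h_{22}+4h_{211}$ confirm the $h$-positivity.

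The genuinely hard step is to pass from the positivities already available --- $(-1)^{n-1}x_n$ is monomial-positive by Theorem~\ref{thm:x-to-m} (apply $\rho$), and $Y_n$ is manifestly power-sum-positive --- to the strictly stronger $h$-positivity of $Y_n$, which does \emph{not} hold termwise because each block contributes $p_{|B|}$ rather than $h_{|B|}$. I would pursue this in one of two ways. The representation-theoretic route is to realise $Y_n$ as the Frobenius characteristic of an honest permutation module with Young-subgroup point stabilisers; concretely one must find an $\mathfrak{S}_n$-set $\Omega$ with $|\Omega^{w}|=n!\,(\ell(\mu)-1)!/\prod_i((i-1)!)^{m_i(\mu)}$ for $w$ of cycle type $\mu$, and exhibit its orbits as Young-type cosets. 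The purely combinatorial route is to prove directly that, for $\sigma$ of shape $\lambda$ with block sizes $a_1,\dots,a_m$,
$$f(\sigma)=\sum_{\tau'\vdash[m]}(\ell(\tau')-1)!\prod_{B'\in\tau'}\frac{(-1)^{|B'|-1}(|B'|-1)!}{\bigl(\sum_{i\in B'}a_i-1\bigr)!}\ \ge\ 0,$$
which equals $(-1)^{n-1}d_\sigma$ after substituting the Möbius values of Equations~\eqref{eq:Mobius} and~\eqref{eq:mymun} into the $e$-expansion displayed just before the conjecture; here $\tau'$ ranges over set partitions of the $m$ blocks of $\sigma$. I would attack this alternating sum either with a sign-reversing involution on the $\tau'$, or via the integral representation $(\ell-1)!=\int_0^\infty s^{\ell-1}e^{-s}\,ds$ combined with the exponential formula, aiming for a nonnegative closed form. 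The main obstacle is exactly this nonnegativity: in the $\mnc$-expansion of Theorem~\ref{thm:x-to-m} the Möbius denominators are absent and the coefficient is recognisably a chromatic polynomial evaluated at $0$, hence an acyclic-orientation count; the factors $1/\mymu(\hat 0,\tau)$ present in the $e$-expansion destroy this chromatic interpretation, and identifying the combinatorial objects counted by these signed sums is the crux of the conjecture.
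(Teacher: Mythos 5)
The statement you are addressing is labelled as a conjecture, and the paper offers no proof of it; there is therefore no argument of the authors' to compare yours against. Your proposal is likewise not a proof: you explicitly leave unresolved the one step on which everything hinges, namely the nonnegativity of the alternating sum $f(\sigma)$ (equivalently, the $h$-positivity of $Y_n$), and neither of the two strategies you sketch for it (a permutation-module realisation, or a sign-reversing involution/integral representation) is carried out. Until that step is supplied, the conjecture remains open, so the honest verdict is: genuine gap, the crux is missing.

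That said, the reductions you do carry out are correct and worth recording. The shape-dependence of the coefficient $d_\sigma$ via Equation~\eqref{eq:actbasis} and $\eta(\stn)=\stn$ is sound, the passage through $\rho$ and Lemma~\ref{lem:RSrho} to $x_n=\sum_{\lambda\vdash n}\tfrac{n!}{\lambda^!}d_\lambda e_\lambda$ is right, and the dualisation via Equation~\eqref{eq:omega} and Proposition~\ref{prop:xasp}(1) to the $h$-positivity of $Y_n=\sum_{\pi\vdash[n]}(\ell(\pi)-1)!\prod_{B\in\pi}p_{|B|}$, with exponential generating function $-\log\bigl(1-\sum_{i\geq 1}p_it^i/i!\bigr)$, checks out (I verified $Y_2,Y_3,Y_4$ against your stated $h$-expansions). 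Your observation that the coefficient of $\enc_{\hat 0}$ in $x_{\stn}$ vanishes for $n\geq 2$ is also correct --- the sum $\sum_{\hat 0\leq\tau\leq\stn}\mymu(\tau,\stn)$ telescopes to $0$ --- so Conjecture~\ref{conj} as literally stated needs the reading ``has sign $(-1)^{n-1}$ or is zero.'' These reformulations genuinely sharpen the problem (in particular they replace a sum weighted by the M\"obius denominators $\mymu(\hat 0,\tau)$ with a manifestly combinatorial $p$-positive symmetric function whose $h$-positivity is the question), but they do not settle it, and your own closing paragraph concedes as much.
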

%$$\sum_{\sigma\leq \tau} (-1)^{n-1-\ell(\tau)-\ell(\sigma)} \frac{(\ell(\tau)-1)!(\lambda(\tau,\sigma)-1)!}{(\lambda(\tau)-1)! },$$ 

We conclude \svw{our} paper with the following theorem. 

\begin{theorem}\label{thm:e-pos} We have that $x_{\stn}$ is $\enc$-positive in $\NCSym$ if and only if $x_{n}$ is $e$-positive in $\Sym$.
\end{theorem}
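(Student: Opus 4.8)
The plan is to relate $e$-positivity of $x_{\stn}$ in $\NCSym$ to $e$-positivity of $x_n$ in $\Sym$ by passing through the commutative image map $\rho$, which by Lemma~\ref{lem:RSrho} sends $\enc_\sigma$ to $\lambda(\sigma)!\,e_{\lambda(\sigma)}$ and, by Equation~\eqref{eq:xinsym}, sends $x_{\stn}$ to $x_n$. Since $\rho$ is a linear map with nonnegative scaling factors $\lambda(\sigma)! > 0$, one direction is essentially automatic: if $x_{\stn} = \sum_\sigma d_\sigma \enc_\sigma$ with all $d_\sigma \geq 0$, then applying $\rho$ gives $x_n = \sum_\sigma d_\sigma\, \lambda(\sigma)!\, e_{\lambda(\sigma)}$, and after collecting terms with the same underlying integer partition $\lambda$, the coefficient of $e_\lambda$ is $\sum_{\lambda(\sigma)=\lambda} d_\sigma\,\lambda!$, a sum of nonnegative terms. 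So $\NCSym$-$e$-positivity of $x_{\stn}$ immediately forces $\Sym$-$e$-positivity of $x_n$.

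The harder direction is the converse, and the key tool is Equation~\eqref{eq:actbasis}, which says $\eta\circ \enc_\sigma = \enc_{\eta(\sigma)}$ for any permutation $\eta$. First I would observe that $x_{\stn}$ is fixed by the $\Sn$-action in the sense that for every $\eta\in\Sn$ we have $\eta\circ x_{\stn} = x_{\eta(\stn)} = x_{\stn}$, since $\eta(\stn) = \stn$ (permuting the elements of the single block $\{1,\dots,n\}$ leaves it unchanged). Consequently, writing $x_{\stn} = \sum_\sigma d_\sigma \enc_\sigma$ and applying $\eta\circ(-)$, the coefficients must be constant on each $\Sn$-orbit of set partitions; that is, $d_\sigma = d_{\eta(\sigma)}$, so $d_\sigma$ depends only on the shape $\lambda(\sigma)$. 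Write $d_\sigma = d_{\lambda(\sigma)}$. Now applying $\rho$ and using that the number of set partitions of shape $\lambda$ is $n!/(\lambda!\,\lambda^!)$ (or more directly just grouping), the coefficient of $e_\lambda$ in $x_n = \rho(x_{\stn})$ is a positive multiple of $d_\lambda$. Thus the coefficient of $e_\lambda$ in $x_n$ and the common coefficient $d_\lambda$ of all $\enc_\sigma$ with $\lambda(\sigma)=\lambda$ in $x_{\stn}$ have the same sign.

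To make this precise, I would compute the exact proportionality constant: grouping $\rho(x_{\stn}) = \sum_\sigma d_{\lambda(\sigma)} \lambda(\sigma)!\, e_{\lambda(\sigma)}$ by shape $\lambda$, the coefficient of $e_\lambda$ is $d_\lambda \cdot \lambda! \cdot \#\{\sigma\vdash[n] : \lambda(\sigma)=\lambda\}$, and the count of set partitions of a given shape is a strictly positive integer. Hence $[e_\lambda]\,x_n = (\text{positive constant})\cdot d_\lambda$, so $d_\lambda > 0$ for all $\lambda\vdash n$ if and only if $[e_\lambda]\,x_n > 0$ for all $\lambda\vdash n$, which is exactly the biconditional claimed.

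The main obstacle, and the step deserving the most care, is the orbit argument establishing $d_\sigma = d_{\lambda(\sigma)}$: one must verify that the $\Sn$-action $\eta\circ(-)$ genuinely fixes $x_{\stn}$ and is compatible with the $\enc$-basis via Equation~\eqref{eq:actbasis}, so that comparing coefficients on both sides of $\eta\circ x_{\stn} = x_{\stn}$ is legitimate. Once the coefficients are known to be shape-constant, the rest is the bookkeeping of how $\rho$ merges the $\NCSym$-$e$-basis into the $\Sym$-$e$-basis, which is governed entirely by Lemma~\ref{lem:RSrho}(3) and is routine.
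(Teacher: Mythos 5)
Your proof is correct, and the converse direction takes a genuinely different (though related) route from the paper's. The forward direction --- pushing a nonnegative $\enc$-expansion through $\rho$ and using Lemma~\ref{lem:RSrho}(3) --- is exactly what the paper does. For the converse, both arguments ultimately rest on the $\mathfrak{S}_n$-invariance of $x_{\stn}$, but they package it differently. The paper works in the $\pnc$-basis: it notes from Proposition~\ref{prop:xasp}(1) that the $\pnc$-coefficients of $x_{\stn}$ depend only on shape, concludes that $x_{\stn}$ lies in the image of the symmetrization map $R$, hence $x_{\stn}=R\rho(x_{\stn})=R(x_n)$, and then computes $R(e_\lambda)$ explicitly to transport $e$-positivity of $x_n$ forward through $R$. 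You instead apply $\eta\circ(-)$ directly to the unknown $\enc$-expansion, using $\eta\circ x_{\stn}=x_{\eta(\stn)}=x_{\stn}$ and $\eta\circ\enc_\sigma=\enc_{\eta(\sigma)}$ from Equation~\eqref{eq:actbasis}, together with the transitivity of $\mathfrak{S}_n$ on set partitions of a fixed shape, to conclude that the coefficients $d_\sigma$ are shape-constant; applying $\rho$ then gives the exact proportionality $[e_\lambda]\,x_n = \bigl(n!/\lambda^!\bigr)\,d_\lambda$. This avoids $R$ and the $\pnc$-basis entirely, and the strictly positive proportionality constant delivers both implications of the biconditional in one stroke (and in fact shows $d_\lambda$ and $[e_\lambda]\,x_n$ always have the same sign, a slightly sharper statement). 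The one step you rightly flag as needing care --- that Equation~\eqref{eq:actbasis} applies to both the $x$- and $\enc$-bases and that $\eta\circ(-)$ is linear, so coefficients may be compared --- is justified by the paper's own use of these facts, so there is no gap.
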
 
\begin{proof} If $\xnc_{\stn}$ is $e$-positive in $\NCSym$, clearly $\xnc_{n}$ is $e$-positive in $\Sym$ \svw{by $\rho$.} 

Now assume that $\xnc_{n}$ is $e$-positive in $\Sym$. 
%By Equation \ref{eq:actbasis}, we have that given $\eta\in \mathfrak{S}_n$ and $\pi\vdash [n]$, 
%$$\eta\circ x_\pi=x_{\eta(\pi)}.$$
%Let $\lambda(\pi)=\lambda$. 
%Therefore, $x_{\pi}$ is $e$-positive if and only if $x_{\llbracket \lambda\rrbracket}$ is $e$-positive.  Also, since the product of $x$-basis comes from the slash product, $x_{\llbracket \lambda\rrbracket}$ is $e$-positive if and only if $x_{\stn}$ is $e$-positive.
By \cite[Equation (4)]{RS}, the number of set partitions with shape $\lambda$ is $\frac{n!}{\lambda!\lambda^{!}}.$
Thus, for  $\sigma\vdash [n]$ with $\lambda(\sigma)=\lambda$,
 $$R(p_\sigma)=\frac{1}{n!}\sum_{\eta\in\mathfrak{S}_n} p_{\eta(\sigma)}=\frac{\lambda! \lambda^{!}}{n!}\sum_{\lambda(\tau)=\lambda} p_{\tau}.$$ Consider that in $x_{\stn}$ the coefficients of $p_\tau$ with $\lambda(\tau)=\lambda$ are the same \svw{by Proposition~\ref{prop:xasp}.} Therefore, $x_{\stn}$ is in the image of $R$, and so $x_{\stn}=R\rho(x_{\stn})=R(x
 _{n})$.

% We have that
% $$x_{\stn}=\sum_{\sigma\vdash [n]} \mymu(\sigma, \stn)p_\sigma=\sum_{\sigma\vdash [n]} (-1)^{\ell(\sigma)-1} (\ell(\sigma)-1)! p_\sigma.$$  
 
  Moreover, by Lemma  \ref{lem:RSrho},
 $$\lambda!e_\lambda=\rho \left(\frac{1}{n!}\sum_{\eta\in \mathfrak{S}_n} e_{\eta(\sigma)}\right).$$ Since $\frac{1}{n!}\sum_{\eta\in \mathfrak{S}_n} e_{\eta(\sigma)}$ is in the image of $R$, we have 
 $$R(e_\lambda)=R\rho \left(\frac{1}{\lambda! n!}\sum_{\eta\in \mathfrak{S}_n} e_{\eta(\sigma)}\right)=\frac{1}{\lambda! n!}\sum_{\eta\in \mathfrak{S}_n} e_{\eta(\sigma)}.$$

 Since $x_n$ is $e$-positive in $\Sym$ \svw{by assumption, we have that}  $x_{\stn}=R(x_n)$ is a positive linear combination of the elements in 
$$\{ R(e_\lambda): \lambda\vdash n\}.$$ Thus, $x_{\stn}$ is a positive linear combination of the elements in  $$\left\{ \frac{1}{\lambda!n!}\sum_{\eta\in \mathfrak{S}_n} e_{\eta(\sigma)}: \sigma\vdash [n] \right\}.$$
Therefore, $x_{\stn}$ is $\enc$-positive in $\NCSym$.
%
%Note that 
%The following map 
%$$R: \Sym \rightarrow \NCSym$$ given by 
%$$R(m_\lambda)=\frac{1}{n!} \sum_{\eta\in \mathfrak{S}_n}m_{\eta\circ[\lambda]}$$ is a linear injection. Note that $f\in \Sym$ is $e$-positive in $\Sym$ if and only if $R(f)$ is a positive linear combination of the elements of the set $$\{ R(e_{\lambda}): \lambda\vdash [n] \}.$$ Since the coefficients of $e_{\tau}$ and $e_{\sigma}$ for all pairs of $\sigma$ and $\tau$ such that  $\lambda(\tau)=\lambda(\sigma)$ are the same in $x_{[n]}$, so $x_{[n]}$ is in the image of $R$. Therefore, $x_{[n]}$ is $e$-positive in $\NCSym$ if and only if $x_{[n]}$ is a positive linear combination of the elements of the set $\{ R(e_{[\lambda]}): \lambda\vdash [n] \}$. Note that $R(\rho(x_{[n]}))=x_{[n]}$, therefore, $x_{[n]}$ is $e$-positive if and only if $\rho(x_{[n]})$ is $e$-positive in $\Sym$.
\end{proof}

% Ack
\section{Acknowledgments}\label{sec:ack}  The authors would like to thank Victor Wang for helpful conversations and for sharing Theorem~\ref{thm:x-to-m} with us.

%%%%%

%: Bibliography
\bibliographystyle{plain}

\def\cprime{$'$}

\end{document}